\renewcommand{\1}{\mathbbm{1}}
\newcommand{\mk}{\mathfrak}
\newcommand{\mc}{\mathcal}
\newcommand{\ms}{\mathscr}
\newcommand{\mf}{\mathbf}
\newcommand{\en}{\enspace}
\newcommand{\sk}[1]{\left\langle #1 \right\rangle}
\def \A {\mathfrak{A}}
\def \L {\mathscr{L}}
\def \Ta {T_{\alpha}}
\def \N {\mathbb{N}}
\def \C {\mathbb{C}}
\def \S {\mathscr{S}}
\def \SS {\mathbf{S}}
\def \GG {\mathbf{ G }}  
\def \s {\sigma}
\def \ol {\overline}
\def \a {\alpha}
\def \b {\beta}
\def \ph {\varphi}
\def \Hphi {H_{\varphi}}
\def \Aphi {\A_{\varphi}}
\def \Tphi {T_{\varphi}}
\def \ran {\operatorname{ran}}
\def \lin {\operatorname{lin}}
\def \ker {\operatorname{ker}}
\def \co {\operatorname{co}}
\def \Sp {\operatorname{Sp}}
\def \Fix {\operatorname{Fix}}
\newcommand{\WA}{$W^{*}$-algebra\ }
\newcommand{\BA}{ \mathfrak{A} }
\theoremstyle{plain}
\newtheorem*{thm*}{Theorem}
\newtheorem{thm}{Theorem}[section]
\newtheorem{lemma}[thm]{Lemma}
\newtheorem{prop}[thm]{Proposition}
\newtheorem{cor}[thm]{Corollary}
\theoremstyle{definition}
\newtheorem{remark}[thm]{Remark}
\title{Decomposition of operator semigroups on W*-algebras}
\author[A. B\'{a}tkai]{Andr\'{a}s B\'{a}tkai}
\address{ELTE TTK, Institute of Mathematics\newline 1117 Budapest, P\'{a}zm\'{a}ny P. s\'{e}t\'{a}ny 1/C, Hungary.}
\email{batka@cs.elte.hu}
\author[U. Groh]{Ulrich Groh}
\address{Institute of Mathematics\\University of T\"ubingen\\Auf der
  Morgenstelle 10\\72076 T\"ubingen\\Germany}
\email{ulgr@fa.uni-tuebingen.de}
\author[D. Kunszenti-Kov\'{a}cs]{D\'{a}vid Kunszenti-Kov\'{a}cs}
\address{Institute of Mathematics\\University of T\"ubingen\\Auf der
  Morgenstelle 10\\72076 T\"ubingen\\Germany}
\email{daku@fa.uni-tuebingen.de}
\author[M. Schreiber]{Marco Schreiber}
\address{Institute of Mathematics\\University of T\"ubingen\\Auf der
  Morgenstelle 10\\72076 T\"ubingen\\Germany}
\email{masc@fa.uni-tuebingen.de}
\date\today
\keywords{von Neumann algebras, compact semitopological semigroups, asymptotics of W$^*$-dynamical systems}
\subjclass[2000]{Primary: 47D3; Secondary: 20M20, 47C15}
\begin{document}

\begin{abstract}
We consider semigroups of operators on a W$^*$-algebra and prove, under appropriate assumptions, the existence of a Jacobs-DeLeeuw-Glicksberg type decomposition.
This decomposition splits the algebra into a ``stable'' and ``reversible'' part with respect to the semigroup and yields, among others, a structural approach to the Perron-Frobenius spectral theory for completely positive operators on W$^*$-algebras.
\end{abstract}
\maketitle

\section{Introduction}

The idea that a dynamical system (or, equivalently, the state space on which it is acting) can be decomposed into a ``structured'' and a ``random'' part is a frequent theme in ergodic theory. 
We refer to Krengel \cite[Section 2.3]{krengel} or to the recent exposition by Tao \cite{tao_blog2}, where such ``splittings'' are considered in an even more general context.

In a purely operator theoretical context, one of the first structural theorems of this type has been obtained by Jacobs \cite{jacobs57}, and has been subsequently extended and clarified by 
DeLeeuw and Glicksberg \cite{dlg59},\cite{dlg61}.

\begin{thm}\label{theo:JDLG}
Let $H$ be a Hilbert space and $\S\subset \L(H)$ a semigroup of contractions, and let $\ol{\S}$ denote its closure in the weak operator topology.
Then $H$ can be decomposed into $\S$-invariant subspaces as %
\begin{equation*}
H = H _{ \mathrm{r} } \oplus  H _{ \mathrm{s} },
\end{equation*}
where $H _{ \mathrm{r} }$ is the space of $\ol{\S}$-reversible elements, i.e.,
\begin{align*}
H _{ \mathrm{r} } & =
	\{x\in H \colon \, \text{ for all } S \in \ol{\S} \,
		\text{ there exists } \, T \in \ol{\S} \, \text{ such that } TSx=x\} \\
	& = \{x\in H \colon \| Sx \| = \| x \| \, \text{ for all } \, S\in\ol{\S} \}
\end{align*}
%
%
and $H _{ \mathrm{s} }$ consists of all elements for which $0$ is a weak accumulation point of the orbit, i.e.,
\begin{equation*}
H _{ \mathrm{s} }  =
	 \left\{ x \in H \colon 0 \in
	 		 \overline{\{ S x \colon S \in   \S \}}  ^ {\sigma ( H , H ^{ * } ) }  \right\}
=\bigcup_{T\in\overline{\S}}\ker T
.
\end{equation*}
\end{thm}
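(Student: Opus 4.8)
The plan is to package $\S$ into a compact algebraic object and to read the decomposition off a single idempotent of it. First, set $\T:=\ol\S$; since the unit ball of $\L(H)$ is compact in the weak operator topology and multiplication is separately weak operator continuous there, $\T$ is a \emph{compact semitopological semigroup} all of whose elements are contractions. The structure theory of such semigroups then provides a (unique) minimal two-sided ideal $\mc K\subseteq\T$ together with an idempotent $Q\in\mc K$ for which $\GG:=Q\T Q$ is a group with identity $Q$. At this point the Hilbert space enters for the first time: a contractive idempotent is automatically an orthogonal projection, so $Q=Q^{*}$, and I would set $H_{\mr r}:=\ran Q=QH$ and $H_{\mr s}:=\ker Q$, which already yields the orthogonal (hence topological) direct sum $H=H_{\mr r}\oplus H_{\mr s}$.

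Next I would analyse $H_{\mr r}$. For $S\in\T$ the operator $QSQ$ lies in the group $\GG\subseteq\T$, so $QSQ$ and $(QSQ)^{-1}$ are contractions whose restrictions to $H_{\mr r}$ are mutually inverse, i.e.\ $QSQ$ restricts to a unitary of $H_{\mr r}$. Hence for $x\in H_{\mr r}$ one has $\|x\|=\|QSQx\|=\|QSx\|\le\|Sx\|\le\|x\|$, giving $QH\subseteq\{x:\|Sx\|=\|x\|\text{ for all }S\in\T\}$, and the reverse inclusion is immediate on taking $S=Q$ and using that $Q$ is an orthogonal projection. The reversibility description then follows: for $x\in H_{\mr r}$ and $S\in\T$ the element $R:=(QSQ)^{-1}\in\GG\subseteq\T$ satisfies $RSx=x$ by a one-line computation with $RQ=QR=R$ and $x=Qx$, while conversely $TSx=x$ with $\|T\|,\|S\|\le1$ forces $\|Sx\|=\|x\|$. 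Finally $H_{\mr r}$ is $\T$-invariant — hence $\S$-invariant — because $\|S(S_{0}x)\|=\|(SS_{0})x\|=\|x\|=\|S_{0}x\|$ for $x\in H_{\mr r}$ and $S_{0},S\in\T$.

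The delicate point for $H_{\mr s}$ is its $\T$-invariance, and I would get it by using $Q=Q^{*}$ to pass to the adjoint semigroup $\T^{*}=\{S^{*}:S\in\T\}$, which is again a compact semitopological semigroup of contractions (the adjoint is weak operator continuous) whose minimal ideal is the adjoint of $\mc K$, so that $Q=Q^{*}$ again lies in it. Applying the previous paragraph to $\T^{*}$ shows that $QH$ is also the reversible part of $\T^{*}$ and is $\T^{*}$-invariant, i.e.\ $S^{*}(QH)\subseteq QH$ for all $S\in\T$; taking adjoints, $QS=QSQ$, whence $Qx=0\imp QSx=0$ and $H_{\mr s}=\ker Q$ is $\T$-invariant. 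For the two descriptions of $H_{\mr s}$: $\ker Q\subseteq\bigcup_{T\in\T}\ker T$ holds because $Q\in\T$, and conversely, if $Tx=0$ then $x=Qx+(I-Q)x$ with $T(Qx)\in H_{\mr r}$, $T((I-Q)x)\in H_{\mr s}$ and $T(Qx)+T((I-Q)x)=0$, so $T(Qx)=0$; since $T|_{H_{\mr r}}=(QTQ)|_{H_{\mr r}}$ is injective this forces $Qx=0$. Lastly $\bigcup_{T\in\T}\ker T=\bigl\{x:0\in\ol{\{Sx:S\in\S\}}^{\s(H,H^{*})}\bigr\}$: if $Tx=0$ pick $S_{\alpha}\in\S$ converging to $T$ in the weak operator topology, so that $S_{\alpha}x\weak0$; and if $S_{\alpha}x\weak0$ then by compactness of $\T$ a subnet has $S_{\alpha}\to T\in\T$, so $Tx=0$.

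I expect the only real obstacle to be the input from the first paragraph — the existence of the minimal ideal $\mc K$, of an idempotent $Q\in\mc K$, and the fact that $Q\T Q$ is a group — which is the heart of the matter and relies on the (nontrivial) structure theory of compact semitopological semigroups. The Hilbert space contributions are comparatively soft but indispensable: contractive idempotents are orthogonal projections, and the detour through $\T^{*}$ that yields $\T$-invariance of $H_{\mr s}$ rests on $Q=Q^{*}$ and has no obvious analogue over a general Banach space.
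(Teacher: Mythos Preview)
Your argument is correct and aligns with the approach the paper sketches: the paper does not give a self-contained proof of Theorem~\ref{theo:JDLG} but refers to the structure theory of compact semitopological semigroups, noting that the closure $\ol{\S}$ is compact semitopological, that its minimal ideal (when a group) has a projection as unit, and that this projection yields the splitting; the Hilbert-space specific step (contractive idempotents are orthogonal projections, and $\ran Q=\{x:\|Sx\|=\|x\|\text{ for all }S\in\ol{\S}\}$) is spelled out later inside the proof of Theorem~\ref{thm:spec}, essentially as you do it. Your detour through the adjoint semigroup $\T^{*}$ to obtain $\T$-invariance of $H_{\mr s}$ is a clean addition that the paper leaves implicit, and your verification that $\ker Q=\bigcup_{T\in\T}\ker T$ via injectivity of $T|_{H_{\mr r}}$ matches the paper's later treatment of the analogous statement for $\ker P$ on the W$^*$-algebra.
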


The subspaces $H _{ \mathrm{r} }$ and $ H _{ \mathrm{s} } $ are usually  referred  to as 
the \textit{reversible} and \textit{stable} subspace of the dynamical system corresponding to the semigroup 
$ \S $.

The proof of Theorem \ref{theo:JDLG} can be obtained via the theory of \emph{compact semitopological semigroups} (see e.g. the monograph \cite{Berglund}).
We briefly summarize the main idea and refer to the original papers.

Let $ \S $ be a semigroup of continuous linear operators on a Banach space $ X $.
If $ \{ S x \colon S \in \S  \} $ is  relatively $ \sigma ( X , X ^{ * } )$-compact for all $ x \in X $, 
then the closure $ \SS := \overline{\S} $ in the weak operator topology is a compact semitopological semigroup. 
It contains a minimal ideal $M(\SS)$, which itself is a semitopological semigroup 
(see DeLeeuw and Glicksberg \cite[Theorem 2.3]{dlg61} for details).

If the minimal ideal is actually a group, then its unit $P$ is a projection, i.e. $P^2=P$, and we have that
\[
	M ( \SS ) = \{ P T  P \colon T \in \SS \} = P \SS P.
\]
The theorem of Ellis \cite[Theorem 2]{ellis} shows that this compact semitopological group is already a topological group. The projection $P$ induces a decomposition of the space $ X $ into the image $ X _{ \mathrm{r} } =\ran P$ and the kernel $ X _{ \mathrm{s} } = \ker P $.
These closed subspaces have the desired properties mentioned in Theorem \ref{theo:JDLG}.
In particular, $ \SS $ acts on $ X_r $ as a compact group of operators and 
$ \SS _{ | X _{ r } } \simeq M ( \SS ) $.

We call this decomposition the \emph{Jacobs-DeLeeuw-Glicksberg decomposition} or \emph{JDLG decomposition} for short. 
It is among the most useful tools in the study of the asymptotics of operators and operator semigroups on Banach spaces.
We refer to Engel and Nagel \cite[Section V.2]{engel-nagel} for applications to evolution equations and to Krengel \cite[Section 2.4]{krengel} for further motivation and references.

In order to use the approach sketched above one needs a topology for which $\SS$ is compact and the multiplication in $ \SS $ is separately continuous.
In general,  the minimal ideal becomes a group only under additional conditions either on the semigroup, e.g. amenability as in \cite[Corollary 2.9]{dlg61}, or on the geometry of the underlying Banach space, e.g. being a Hilbert space as in \cite[Satz 5.1]{jacobs57}.

For operator semigroups on W$^{ * }$-algebras we show that such a decomposition exists under natural assumptions.
The following theorem is the main result of this paper.
\begin{thm}\label{thm:main}
Let $\S\subset \L(\A,\s^*)$ be a bounded semigroup of $\s^*$-continuous operators on a 
 W$^{ * }$-algebra $\A$ and let $ \Phi \subset \A_*$ be a faithful family of normal states on $\A$ satisfying
\begin{equation}
 \label{eq:ungleichung}
\sk{\varphi,(Sx)^*(Sx)}\le \sk{\varphi,x^*x}\qquad \text{ for all } \;  x\in\A, S\in\S, \ph\in\Phi.
\end{equation}
Then $\SS:=\ol{\S}^{\L _{ s } (   \mathfrak{A} ,  \sigma ^{ * }   )}$ is a compact semitopological semigroup containing a unique minimal ideal $M(\SS)$ which is a compact topological group with unit $P$.

This projection $ P $ induces a decomposition 
$ \mathfrak{A} = \mathfrak{A} _{ r } \oplus  \mathfrak{A} _{ s } $, where
\begin{align*}
	\ran P=\mathfrak{A} _{ r }	&=\{x\in\A:  \, \text{ for all } S \in\SS \, 
			\text{ there exists } \, T \in \SS \, \text{ such that } TSx=x\} \\
						&= \{ x \in \A \colon  \sk{\ph,(Tx)^*(Tx)}=\sk{\ph,x^*x}\en \, 
			\text{ for all } \,  T\in\SS,\ph\in\Phi\} \\
			&= \{ x \in \A \colon  \sk{\ph,(Tx)^*(Ty)}=\sk{\ph,x^*y}\en \, 
			\text{ for all $ T\in\SS,\ph\in\Phi, y \in \BA $} \}
\end{align*}
and 
\begin{equation*}
	\ker P=\mathfrak{A} _{ s } = \{x\in\A \colon  0\in \SS x\} %
		= \left\{ x \in \BA \colon 0 \in 
				\overline{\{ T x \colon T \in \SS \}} ^{ \sigma ( \BA , \BA_{*} ) } \; \right\} %
		=  \bigcup _ { T \in \SS }\ker T .
\end{equation*}
\end{thm}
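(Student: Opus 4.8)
The plan is to run the Jacobs--DeLeeuw--Glicksberg machinery described in the introduction, but with respect to the topology $\L_s(\A,\s^*)$, and to obtain the geometric input that makes the minimal ideal a group from the GNS Hilbert spaces attached to the members of $\Phi$. For $\ph\in\Phi$ let $(\Hphi,\Lambda_\ph)$ be the associated GNS construction, so $\Lambda_\ph\colon\A\to\Hphi$ is linear with dense range and $\sk{\Lambda_\ph x,\Lambda_\ph y}=\sk{\ph,x^*y}$. Then \eqref{eq:ungleichung} says exactly that $\|\Lambda_\ph(Sx)\|\le\|\Lambda_\ph x\|$ for all $x\in\A$, $S\in\S$; in particular $S$ preserves the null ideal $N_\ph=\{z:\sk{\ph,z^*z}=0\}$ and hence induces a linear contraction $S_\ph$ on $\Hphi$ with $S_\ph\Lambda_\ph x=\Lambda_\ph(Sx)$. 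As $N_\ph=\bigcap_{y\in\A}\ker\sk{\ph,y^*\,\cdot\,}$ is $\s^*$-closed and $z\mapsto\Lambda_\ph z$ is $\s^*$-to-weakly continuous on bounded sets, this construction survives passing to pointwise-$\s^*$ limits: every $T\in\SS$ induces a contraction $\Tphi$ on $\Hphi$, the map $\rho_\ph\colon T\mapsto\Tphi$ is a semigroup homomorphism, and it is continuous from $\L_s(\A,\s^*)$ into $\L(\Hphi)$ with the weak operator topology. Consequently $\S_\ph:=\overline{\{S_\ph:S\in\S\}}=\rho_\ph(\SS)$ is a compact semitopological semigroup of Hilbert-space contractions, the exact setting of Theorem~\ref{theo:JDLG}.

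Next I would show that $\SS$ is a compact semitopological semigroup of \emph{normal} operators. Since $\S$ is bounded, say by $C$, each orbit $\{Sx:S\in\S\}$ lies in a norm ball of $\A=(\A_*)^*$, which is $\s^*$-compact by Alaoglu; hence $\SS$ is a closed subset of the compact product $\prod_{x\in\A}\overline{\{Sx:S\in\S\}}^{\,\s^*}$, so it is $\s^*$-pointwise compact and consists of linear operators of norm at most $C$. \emph{The decisive step --- which I expect to be the main obstacle --- is that every $T\in\SS$ is $\s^*$-continuous.} Fix $T=\lim_\alpha S_\alpha$ with $S_\alpha\in\S$. First take $\psi$ of the special form $\sk{\ph,b\,\cdot\,}$ with $\ph\in\Phi$, $b\in\A$: then $(\psi\circ S_\alpha)(x)=\sk{\ph,b\,S_\alpha x}=\sk{(S_\alpha)_\ph\Lambda_\ph x,\Lambda_\ph(b^*)}$, i.e. $\psi\circ S_\alpha=R_\ph\big((S_\alpha)_\ph^{*}\Lambda_\ph(b^*)\big)$ with $R_\ph\colon\Hphi\to\A_*$, $(R_\ph\zeta)(x)=\sk{\Lambda_\ph x,\zeta}$, a contraction into $\A_*$; since $\{(S_\alpha)_\ph^{*}\Lambda_\ph(b^*):\alpha\}$ is bounded in $\Hphi$, hence relatively weakly compact, the set $\{\psi\circ S:S\in\S\}$ is relatively weakly compact in $\A_*$. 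Faithfulness of $\Phi$ makes the linear span of such functionals have trivial annihilator in $\A$ (take $b=x^*$), hence norm-dense in $\A_*$, and a standard approximation argument using $\|\,\cdot\circ S\|\le C$ extends the relative weak compactness of $\{\psi\circ S:S\in\S\}$ to arbitrary $\psi\in\A_*$. Consequently, for every $\psi\in\A_*$ the net $(\psi\circ S_\alpha)$ has a weak cluster point in $\A_*$, which must coincide with its $\s(\A^*,\A)$-limit $\psi\circ T$; so $\psi\circ T\in\A_*$, i.e. $T$ is normal. Granted this, closedness of $\SS$ under composition and separate $\L_s(\A,\s^*)$-continuity of multiplication follow routinely from $\SS=\overline{\S}$, the closedness of $\SS$ in $\L_s(\A,\s^*)$, and the $\s^*$-continuity of the elements of $\SS$.

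Being a compact semitopological semigroup, $\SS$ has a unique minimal two-sided ideal $M(\SS)$, which is completely simple, by the structure theory in \cite[Thm~2.3]{dlg61} (see also \cite{Berglund}); and $M(\SS)$ is a group iff it contains exactly one idempotent. Let $Q\in M(\SS)$ be an idempotent. For each $\ph$, $Q_\ph=\rho_\ph(Q)$ is a contractive idempotent on $\Hphi$, hence an orthogonal projection; and since $\rho_\ph$ is a continuous surjective homomorphism onto $\S_\ph$, it maps $M(\SS)$ onto $M(\S_\ph)$ (a standard fact for such homomorphisms of compact semitopological semigroups). By Theorem~\ref{theo:JDLG} applied in the Hilbert space $\Hphi$, $M(\S_\ph)$ is a group whose only idempotent is the JDLG projection $P_\ph$ (onto the reversible subspace of $\S_\ph$), so $Q_\ph=P_\ph$ for every $\ph$. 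As $\Phi$ is faithful, $(\rho_\ph)_{\ph\in\Phi}$ separates the points of $\SS$ (if $\Tphi=T'_\ph$ for all $\ph$, then $\sk{\ph,(Tx-T'x)^*(Tx-T'x)}=0$ for all $\ph,x$, forcing $Tx=T'x$). Hence $M(\SS)$ has exactly one idempotent $P$ and is a group with identity $P=P^2$; it is compact (it equals $P\SS P$) and semitopological, hence a compact \emph{topological} group by Ellis's theorem \cite[Thm~2]{ellis}.

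Finally I would read off the decomposition. As a bounded idempotent normal operator, $P$ splits $\A=\ran P\oplus\ker P=:\A_r\oplus\A_s$ topologically, with both summands $\SS$-invariant. From $M(\SS)=\SS P=P\SS P$ one extracts: every $g\in M(\SS)$ satisfies $g=PgP$, maps $\ran P$ bijectively onto $\ran P$ and annihilates $\ker P$, so $\ran g=\ran P$ and $\ker P\subseteq\ker g$; and for $S\in\SS$ the element $SP\in M(\SS)$ is invertible in $M(\SS)$ with $(SP)^{-1}SP=P$. Hence, if $x\in\ran P$ then $Sx=(SP)x$ and $(SP)^{-1}Sx=x$ for all $S\in\SS$, so $x$ is reversible; conversely, if $x$ is reversible, then taking $S=P$ gives $x=TPx$ for some $T\in\SS$, so $x\in\ran(TP)=\ran P$. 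This yields the first description of $\A_r$. For the other two, if $x\in\ran P$ then $\Tphi\Lambda_\ph x=(TP)_\ph\Lambda_\ph x$ with $(TP)_\ph\in M(\S_\ph)$ acting isometrically on $\ran P_\ph\ni\Lambda_\ph x$, whence $\sk{\ph,(Tx)^*(Tx)}=\sk{\ph,x^*x}$; since a Hilbert-space contraction $\Tphi$ with $\|\Tphi\Lambda_\ph x\|=\|\Lambda_\ph x\|$ satisfies $\Tphi^{*}\Tphi\Lambda_\ph x=\Lambda_\ph x$, also $\sk{\ph,(Tx)^*(Ty)}=\sk{\ph,x^*y}$ for all $y\in\A$; conversely either of these (put $y=x$ in the latter) forces $\Lambda_\ph x\in\ran\rho_\ph(P)$ for all $\ph$, hence $\Lambda_\ph(Px-x)=0$ for all $\ph$ and, by faithfulness, $Px=x$. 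Dually, $x\in\ker P$ trivially gives $0=Px\in\SS x$ and $x\in\ker P\subseteq\bigcup_{T\in\SS}\ker T$ (as $P\in\SS$); conversely if $Tx=0$ for some $T\in\SS$ then $0=(PT)x$ with $PT\in M(\SS)$, so $Px=(PT)^{-1}(PT)x=0$; and since $\SS x$ is $\s^*$-compact, hence $\s^*$-closed, $0\in\overline{\SS x}^{\,\s^*}$ iff $0\in\SS x$. Thus all three descriptions of $\A_r$ and all three of $\A_s$ hold, which completes the proof.
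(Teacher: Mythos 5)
Your proposal is correct, and its overall architecture is the same as the paper's: represent the semigroup on a GNS Hilbert space for each $\ph\in\Phi$, check that $T\mapsto T_\ph$ is a continuous homomorphism into a compact semitopological semigroup of contractions, use the Hilbert-space JDLG theory to see that each $M(\SS_\ph)$ is a group with a unique idempotent, use faithfulness of $\Phi$ to separate points of $\SS$ and conclude that $M(\SS)$ has a unique idempotent $P$, and finish with Ellis's theorem and the group structure of $M(\SS)=P\SS P$ for the range/kernel descriptions. The genuine divergence is in the step you yourself call decisive, the $\s^*$-continuity of the elements of $\SS$: the paper proves it via Sakai's normality criterion (reduction to decreasing nets $x_\b\downarrow 0$), the inequality giving $\|T_\a x_\b\|_\ph\to 0$ uniformly in $\a$, and the coincidence of the $s$-topology with the $\Phi$-seminorm topology on the unit ball (Lemmas \ref{lem:top1}/\ref{lem:top2}, resting on Takesaki/Dixmier), whereas you show that every preadjoint orbit $\{\psi\circ S:S\in\S\}$ is relatively weakly compact in $\A_*$ — first for $\psi=\sk{\ph,b\,\cdot\,}$ by factoring through the bounded set $\{S_\ph^*\Lambda_\ph(b^*)\}$ in $H_\ph$, then for all $\psi$ by norm-density (which is where faithfulness of $\Phi$ enters) and Grothendieck's approximation lemma — so that the $\sigma(\A^*,\A)$-limit $\psi\circ T$ must lie in $\A_*$. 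This route avoids the monotone-net criterion and the topology-comparison lemmas altogether, and it yields as a by-product the relative weak compactness of the orbits in the predual (the K\"ummerer--Nagel setting). Two smaller deviations: you work on the GNS quotient modulo $N_\ph$ rather than on $K_\ph=\A p_\ph$ with the support projection, which makes the homomorphism property of $\theta_\ph$ automatic instead of requiring the computation in Proposition \ref{prop:cont}; and where the paper re-derives the uniqueness of the minimal projection of $\SS_\ph$ by the orthogonal-projection/strict-convexity argument, you invoke Theorem \ref{theo:JDLG} — acceptable, but note that the theorem as literally stated does not assert the group structure of the minimal ideal, so you should either cite Jacobs' Satz 5.1 or reproduce that short convexity argument. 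Your verifications of the three descriptions of $\ran P$ and of $\ker P$ (via invertibility in $M(\SS)$ and the identity $T_\ph^*T_\ph\xi=\xi$ for norm-preserved vectors) are complete and in fact more explicit than the paper's Section \ref{sect:char} treatment.
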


The proof relies on the \emph{Gelfand-Naimark-Segal} construction  allowing to extend our semigroup to a semigroup of contractions on a Hilbert space, so that Theorem \ref{theo:JDLG} is
applicable. 

An important consequence of Theorem \ref{thm:main} is the following.

\begin{cor}
\label{cor:star_automorphisms}
If, under the assumptions of Theorem \ref{thm:main}, the operators 
in $ \S $ are completely positive and $S^*\ph=\ph$ for all $\ph\in\Phi$ and $S\in\S$, then $ \BA _{ r } $ is a W$^*$-subalgebra of $\A$ and the minimal ideal $ M ( \SS ) $ acts as a compact group of $ ^{ * }$-automorphisms on $\mathfrak{A} _{ r } $.
\end{cor}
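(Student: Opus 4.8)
The plan is to show that $\A_r=\ran P$ is a unital, self-adjoint, $\s^*$-closed subalgebra of $\A$ which is contained in the multiplicative domain of every element of $\SS$, and then to read off the automorphism property from the group structure of $M(\SS)$.

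First I would transfer the extra hypotheses to $\SS$. If $T=\lim_\alpha S_\alpha$ in $\L_s(\A,\s^*)$, then for every $[x_{ij}]\in M_n(\A)_+$ the element $[T(x_{ij})]$ is a $\s^*$-limit of the net $\bigl([S_\alpha(x_{ij})]\bigr)_\alpha$, which lies in the $\s^*$-closed cone $M_n(\A)_+$; hence $T$ is completely positive. Likewise $\sk{T^*\ph,y}=\lim_\alpha\sk{S_\alpha^*\ph,y}=\sk{\ph,y}$ by normality of $\ph$, so $T^*\ph=\ph$ for all $T\in\SS$, $\ph\in\Phi$. Next I would note that every $S\in\S$ is unital: with $a:=S\1\ge0$ one has $\sk{\ph,a}=\sk{S^*\ph,\1}=1$ and, by \eqref{eq:ungleichung}, $\sk{\ph,a^*a}\le\sk{\ph,\1}=1$; positivity of the form $(x,y)\mapsto\sk{\ph,x^*y}$ then forces $1=|\sk{\ph,a}|^2\le\sk{\ph,a^*a}\le 1$, so Cauchy--Schwarz is an equality and $\sk{\ph,(a-\1)^*(a-\1)}=0$; faithfulness of $\Phi$ gives $S\1=\1$. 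Thus every $T\in\SS$ is a \emph{unital} completely positive map: in particular it is $*$-preserving and obeys the Kadison--Schwarz inequality $T(x)^*T(x)\le T(x^*x)$. Consequently $\1\in\A_r$; since $P\in\SS$ is positive it is $*$-preserving, so $\A_r=\ran P$ is self-adjoint; and $\A_r=\ker(I-P)$ with $P$, hence $I-P$, $\s^*$-continuous, so $\A_r$ is $\s^*$-closed.

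The key step is to show $\A_r\subseteq\mc D_T:=\{a\in\A:T(a^*a)=T(a)^*T(a),\ T(aa^*)=T(a)T(a)^*\}$ for every $T\in\SS$. Take $x\in\A_r$. By Kadison--Schwarz, $0\le T(x^*x)-T(x)^*T(x)$; pairing with $\ph\in\Phi$ and using $T^*\ph=\ph$ and the second description of $\A_r$ in Theorem~\ref{thm:main} we get $\sk{\ph,T(x^*x)}=\sk{\ph,x^*x}=\sk{\ph,(Tx)^*(Tx)}$, so $\sk{\ph,T(x^*x)-T(x)^*T(x)}=0$, whence $T(x^*x)=T(x)^*T(x)$ by faithfulness; applying this to $x^*\in\A_r$ yields $T(xx^*)=T(x)T(x)^*$. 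Now I would invoke the classical multiplicative-domain theorem for unital completely positive maps: $\mc D_T$ is a C$^*$-subalgebra of $\A$ over which $T$ acts as a bimodule $*$-homomorphism, i.e. $T(ab)=T(a)T(b)$ whenever $a\in\mc D_T$. Hence for $x,y\in\A_r$ we have $T(xy)=T(x)T(y)$ for all $T\in\SS$; with $T=P$ and $Px=x$, $Py=y$ this gives $xy=P(x)P(y)=P(xy)\in\ran P=\A_r$. Together with the previous paragraph, $\A_r$ is a unital, self-adjoint, $\s^*$-closed subalgebra of $\A$, i.e. a W$^*$-subalgebra.

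For the automorphisms, let $U\in M(\SS)=P\SS P$. Then $U$ is a unital completely positive $*$-preserving map with $\ran U\subseteq\ran P=\A_r$, and by the key step it is multiplicative on $\A_r$, so $U|_{\A_r}$ is a unital $*$-endomorphism of $\A_r$. Since $P$ acts as the identity of $\A_r$ and $M(\SS)$ is a group with unit $P$, the inverse $U^{-1}\in M(\SS)$ gives $U|_{\A_r}\circ U^{-1}|_{\A_r}=\mathrm{id}_{\A_r}$, so $U|_{\A_r}\in\operatorname{Aut}(\A_r)$. The restriction map $M(\SS)\to\operatorname{Aut}(\A_r)$, $U\mapsto U|_{\A_r}$, is a group homomorphism, injective since $U=U\circ P=U|_{\A_r}\circ P$, with continuous inverse $W\mapsto W\circ P$; as $M(\SS)$ is a compact topological group by Theorem~\ref{thm:main}, it acts on $\A_r$ as a compact group of $*$-automorphisms. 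I expect the main obstacle to be exactly the key step — getting $\A_r$ inside $\mc D_T$ — where the Kadison--Schwarz inequality (available because of the unitality established above), the invariance $T^*\ph=\ph$, and the faithfulness of $\Phi$ must all be used together; the rest is the standard multiplicative-domain machinery together with bookkeeping for $P$.
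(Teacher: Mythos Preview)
Your argument is correct and rests on the same tools as the paper---the Kadison--Schwarz inequality, faithfulness of $\Phi$, and multiplicative-domain reasoning---but the organisation differs enough to be worth noting. The paper separates the two conclusions: for the subalgebra property it first observes (Remark~\ref{rem:faithful}) that $P^*\ph=\ph$ makes $P$ a \emph{faithful} operator, and then argues (Proposition~\ref{prop:choi-effros-faithful}) that for $x\in\A_r$ one has $0\le P(x^*x)-x^*x$ while $P\bigl(P(x^*x)-x^*x\bigr)=0$ by idempotence, whence $x^*x\in\A_r$ and $P$ is a conditional expectation; for the automorphism property it invokes a separate lemma (Lemma~\ref{lem:schwarz-invertrierbar-automorphism}) stating that an invertible map satisfying the Schwarz inequality in both directions is automatically a $*$-automorphism. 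You instead establish the single stronger statement $\A_r\subseteq\mc D_T$ for \emph{every} $T\in\SS$, by combining $T^*\ph=\ph$ with the isometric description of $\A_r$ from Theorem~\ref{thm:main}, and then read off both conclusions at once from Choi's multiplicative-domain theorem and the group structure of $M(\SS)$. Your route is more streamlined and, as a bonus, makes the unitality of every $T\in\SS$ explicit (something the paper uses tacitly when invoking the Schwarz inequality); the paper's route, on the other hand, isolates the faithfulness of $P$ and the conditional-expectation identity $P(yxz)=yP(x)z$ as standalone facts that it reuses later, e.g.\ in Proposition~\ref{prop:non-commutative-Perron-Frobenius}.
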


In this case the structure of $ \ran P $ has been studied in \cite{Albeverio} and in \cite{olesen1980} for abelian semigroups $ \S $.
The non-commutative case has been investigated by M. Landstad in \cite{landstad1992} and by A. Wassermann in \cite{wassermann1989}.
For further details we refer to those papers.

In Section \ref{sec:prelim}, we summarize the necessary background on W$^{*}$-algebras.
For the convenience of the reader, we prove our main decomposition theorem in two steps. 
In Section \ref{sect:spec}, we deal with a special case where the main idea of our proof becomes evident. 
In Section \ref{sect:gen} the general case is  treated.
In Section \ref{sect:char} we give a more detailed description of the invariant subspaces arising in the decomposition.
We close this paper by giving some examples in which additional assumptions on the semigroup allow for better structural results on the decomposition.
Finally, we show in Section \ref{sect:ex} how the Perron-Frobenius theory for completely positive operators on 
W$^{*}$-algebras (see \cite{ug1984}) follows naturally from the JDLG decomposition for such operators.

\section{Terminology and basic tools}\label{sec:prelim}

For the theory of W$^{ * }$-algebras we use the notation and terminology of Takesaki \cite{Takesaki} and Sakai \cite{Sakai}.

By $  \mathfrak{A} $ we always denote a  W$^{*}$-algebra with unit $ \1 $ and predual $  \mathfrak{A} _{ * } $ and its  positive cone $  \mathfrak{A} ^{ + } $.
By $ \sigma ^ { * } $ (resp.\@ $\sigma $) we denote the
$ \sigma (  \mathfrak{A} ,  \mathfrak{A} _{ * } ) $-topology on $  \mathfrak{A} $ (resp.\@ the
$ \sigma (  \mathfrak{A} _{ * },  \mathfrak{A} ) $-topology on $  \mathfrak{A} _{ * } $).

A normal linear form $ \ph \in  \mathfrak{A} _{ * } $ is positive if
$ \sk{\ph,  x } := \ph(x) \geq 0 $ for all $  x \in \mathfrak{A} ^{ + } $ and  
$  \mathfrak{A} _{ * } ^{ + } $ stands for the cone of all positive normal linear forms. Further, if 
$ \sk{\ph, \1 } = 1 $, then $ \ph $ is called a (normal) state.


If $ \ph \in \mathfrak{A} _{ * } ^{ + } $, then
$ \mathrm { s }  _ {  \ph  } ( x ) := \sk{\ph,  x ^{ * } x } ^{ 1 / 2 } $ defines a seminorm on $  \mathfrak{A} $.
The topology generated by the family $ \{ \mathrm { s }  _ {  \ph  } \colon \ph \in \mathfrak{A} _{ * } ^{ + } \} $ 
is called the $ s (  \mathfrak{A} ,  \mathfrak{A} _{ * } )$-topology or $s$-topology.

A set $ \Phi \subseteq \mathfrak{A} _{ * } ^{ + } $ is called {faithful} if
$  x \in \mathfrak{A} ^{ + } $ and $ \sk{\ph, x} = 0 $ for all $ \ph \in \Phi $ implies
$ x = 0 $.

We use the following notations for the various operator topologies on $\L(\A)$, the space of all bounded linear operators on $\A$:
\begin{itemize}
\item
The \emph{weak$ ^{ * } $ operator topology} on $ \L (  \mathfrak{A} ) $ is
the topology of pointwise convergence on  $ \mathfrak{A} $ endowed with the 
$ \sigma ( \mathfrak{A} , \mathfrak{A} _{ * } ) $-topology.
We denote it by 
$ \L _{ s } (   \mathfrak{A} ,  \sigma ^{ * }   )$.

\item
The \emph{weak operator topology} on $ \L (  \mathfrak{A} _{ * } ) $ is
the topology of pointwise convergence on  $ \mathfrak{A}_{ * } $ endowed with the 
$ \sigma ( \mathfrak{A} _{ * } , \mathfrak{A}  ) $-topology.
We denote it by 
$ \L _{ s } (  \mathfrak{A} _{ * } , \sigma ) $.

\end{itemize}

The following two results will be used throughout the paper.

\begin{lemma}\label{lemma2}
Let $ \S $ be a subsemigroup of the space $ \L ( \A, \s^* ) $ of all $\s^*$-continuous operators on $\A$. Then $ \S $ is semitopological, i.e., the multiplication is separately continuous in 
$ \L _{ s } (   \mathfrak{A} ,  \sigma ^{ * }   ) $.
\end{lemma}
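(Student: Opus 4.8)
The plan is to unravel the definition of the weak$^*$ operator topology $\L_s(\A,\s^*)$ and reduce separate continuity of multiplication to two elementary facts: that each fixed operator $S\in\S$ is $\s^*$-continuous (which is the hypothesis), and that $\s^*$-convergence of a net of operators evaluated at a fixed point is precisely convergence in $\L_s(\A,\s^*)$. Concretely, fix $S,T\in\S$ and suppose $(S_\iota)\to S$ in $\L_s(\A,\s^*)$; we must show $S_\iota T\to ST$ and $TS_\iota\to TS$ in the same topology. Since the topology is that of pointwise $\s^*$-convergence on $\A$, it suffices to check this after evaluating at an arbitrary $x\in\A$ and pairing with an arbitrary $\ph\in\A_*$.

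First I would handle right multiplication (the easy half). For the net $S_\iota T$, evaluate at $x\in\A$: by assumption $S_\iota(Tx)\to S(Tx)$ in $\s^*$, which is exactly the statement that $(S_\iota T)x\to (ST)x$ in $\s^*$ for every $x$, i.e. $S_\iota T\to ST$ in $\L_s(\A,\s^*)$. No continuity of $T$ is even needed here beyond its being an operator on $\A$.

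Next I would handle left multiplication. For the net $TS_\iota$, evaluate at $x\in\A$ and apply $\ph\in\A_*$: we need $\sk{\ph, T S_\iota x}\to\sk{\ph, TSx}$. Now $S_\iota x\to Sx$ in $\s^*$ by hypothesis on the net. Since $T\in\L(\A,\s^*)$ is $\s^*$-continuous, the element $\ph\circ T$, i.e. the functional $y\mapsto\sk{\ph,Ty}$, lies in $\A_*$ (this is the standard characterization: an operator on $\A$ is $\s^*$-continuous iff its adjoint leaves the predual $\A_*$ invariant). Hence $\sk{\ph,TS_\iota x} = \sk{\ph\circ T, S_\iota x}\to\sk{\ph\circ T, Sx}=\sk{\ph,TSx}$, as required. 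This gives $TS_\iota\to TS$ in $\L_s(\A,\s^*)$.

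The only mild subtlety — and the one point I would state carefully — is the identification of $\s^*$-continuity of $T$ with the condition $T^*(\A_*)\subseteq\A_*$; everything else is a direct unwinding of definitions. Since both one-sided multiplication maps are continuous, multiplication is separately continuous, so $\S$ is a semitopological semigroup, completing the proof.
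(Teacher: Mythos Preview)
Your proof is correct and follows essentially the same approach as the paper: you dispatch right multiplication immediately and handle left multiplication by invoking the equivalence between $\sigma^*$-continuity of $T$ and the condition $T^*(\A_*)\subseteq\A_*$, then pair against an element of the predual pulled back through $T$. The paper phrases this last step in terms of the pre-adjoint $S_*:=S^*|_{\A_*}$, but the content is identical.
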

\begin{proof}
Let $(\Ta)\subset \S$ be a convergent net such that $\Ta\xrightarrow []{\L _{ s } (   \mathfrak{A} ,  \sigma ^{ * }   )} T\in\S$ and take $S\in \S$. It is easy to see that multiplication from the right is continuous, so we only consider multiplication from the left.
Since $ S \in \L (  \mathfrak{A} ) $ is continuous on $  \mathfrak{A} $ for the
$ \sigma ^{ * } $-topology, its adjoint $ S ^{ * } $  on $  \mathfrak{A} ^{ *} $ leaves
the predual invariant.
Thus $ S $ possesses a pre-adjoint $  S _{ * } :=S^*|_{\mk{A}}$ on $  \mathfrak{A} _{ * } $, i.e.,
$ S = (  S _{ * } ) ^{ * } $.

Hence for all $x\in\A, \psi\in\A_*$ we obtain that
$$\sk{S\Ta x,\psi}=\sk{\Ta x, S_*\psi}\longrightarrow \sk{Tx,S_*\psi}=\sk{STx,\psi}.$$
\end{proof}
%

For the proof of the following lemma see Sakai \cite[Thm. 1.7.8]{Sakai}.
\begin{lemma}
 \label{lemma:Sakai_1.7.8}
The mappings $x\mapsto x^*$, $x\mapsto yx$ and $x\mapsto xy$ are continuous for the $\s^*$-topology for all $y\in\A$.
\end{lemma}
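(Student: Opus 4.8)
The plan is to reduce all three continuity claims to a single structural fact about the predual, namely that $\A_*$ is a self-adjoint two-sided $\A$-module. Fix $y\in\A$. For a net $x_\alpha\to x$ in the $\s^*$-topology, convergence means $\sk{\ph,x_\alpha}\to\sk{\ph,x}$ for every $\ph\in\A_*$. Thus $x\mapsto yx$ is $\s^*$-continuous precisely when, for each $\ph\in\A_*$, the functional $x\mapsto\sk{\ph,yx}$ again belongs to $\A_*$: in that case we apply the defining convergence to this functional and conclude $\sk{\ph,yx_\alpha}\to\sk{\ph,yx}$. The same reduction handles $x\mapsto xy$ via the functional $x\mapsto\sk{\ph,xy}$, and the involution $x\mapsto x^*$ via the complex-linear functional $\ph^*$ given by $\ph^*(x)=\overline{\sk{\ph,x^*}}$, the conjugation being harmless since it merely reorders a limit. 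So it suffices to show that $\ph(y\,\cdot)$, $\ph(\,\cdot\,y)$ and $\ph^*$ are normal whenever $\ph$ is.

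By writing an arbitrary $\ph\in\A_*$ as a linear combination of four positive normal functionals (the Jordan-type decomposition of normal functionals) and using that the three operations above are linear, respectively conjugate-linear, in $\ph$, I may assume $\ph\in\A_*^+$. For such $\ph$ I would pass to the GNS representation $(\pi_\ph,H_\ph,\xi_\ph)$, so that $\sk{\ph,x}=\langle\pi_\ph(x)\xi_\ph,\xi_\ph\rangle$. The key point is that the GNS representation of a \emph{normal} state is a normal, that is $\s$-weakly continuous, representation onto the von Neumann algebra $\pi_\ph(\A)$. Granting this, I rewrite each pulled-back functional as a vector functional on $\pi_\ph(\A)$: for instance $\sk{\ph,yx}=\langle\pi_\ph(x)\xi_\ph,\pi_\ph(y^*)\xi_\ph\rangle$ and $\sk{\ph,xy}=\langle\pi_\ph(x)\,\pi_\ph(y)\xi_\ph,\xi_\ph\rangle$, while $\ph^*=\ph$ outright for positive $\ph$. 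Vector functionals are $\s$-weakly continuous on a von Neumann algebra, hence normal, and composing with the normal representation $\pi_\ph$ returns a normal functional on $\A$, as required.

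I expect the main obstacle to be exactly this normality of the GNS representation: one must show that $\pi_\ph$ carries bounded increasing nets to $\s$-weakly convergent ones, equivalently that $\pi_\ph$ is continuous from the $\s^*$-topology on $\A$ to the $\s$-weak topology on $\pi_\ph(\A)$. This is the substantive content behind Sakai's theorem, and it rests on the order-continuity built into the definition of a normal state together with the standard identification of normal functionals on a von Neumann algebra with the $\s$-weakly continuous ones. Once that identification is in place, the module and self-adjointness properties of $\A_*$ follow as above, and the three continuity statements are immediate. The only extra care is in the involution: since $x\mapsto x^*$ is conjugate-linear, I track the genuinely complex-linear functional $x\mapsto\overline{\sk{\ph,x^*}}$, which lies in $\A_*$ because $\ph^*\in\A_*$, so no separate continuity argument is needed there.
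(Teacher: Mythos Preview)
The paper does not supply its own proof of this lemma; it simply refers the reader to Sakai \cite[Thm.~1.7.8]{Sakai}. So there is no in-paper argument to compare against.

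Your sketch is correct and is in fact one of the standard routes to the result: reduce each continuity statement to the assertion that $\A_*$ is invariant under left and right multiplication by elements of $\A$ and under the operation $\ph\mapsto\ph^*$, then verify these closure properties by writing an arbitrary normal functional as a combination of normal states and expressing the resulting functionals as vector functionals through the (normal) GNS representation. The computations you indicate are accurate, including the handling of the conjugate-linearity of the involution via $\ph^*$.

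One small caveat worth flagging: in Sakai's \emph{abstract} development of W$^*$-algebras (without assuming a concrete Hilbert-space realisation from the start), the normality of the GNS representation of a normal state is itself derived \emph{after} one has shown that $\A_*$ is a self-adjoint $\A$-bimodule, so invoking it to prove the bimodule property can be circular depending on the logical order chosen. In the setting of this paper, however, the concrete von Neumann algebra picture (or Sakai's representation theorem) is available, and with that in hand your argument is perfectly sound.
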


\section{Case of a single faithful normal state}\label{sect:spec}

We now treat the special case where the family $\varPhi$ consists of a single element $\varphi$.

If $\ph\in\A_*$ is a faithful normal state on $\A$, then the sesquilinear form $$\sk{x,y}_{\ph}:=\sk{\ph,y^*x}\quad \text{ for all } \;  x,y\in\A$$
turns $\mk{A}$ into a pre-Hilbert space $(\A_{\ph},\sk{\cdot,\cdot}_{\ph})$. We denote its completion by $(\Hphi,\sk{\cdot,\cdot}_{\Hphi})$.
Note that we have the following correspondence between the $\|\cdot\|_{\ph}$- and $s(\A,\A_*)$-topologies, allowing us to apply Hilbert space results in our context.

\begin{lemma}\label{lem:top1}
Let $  \mathfrak{A} _{ 1 } $ be the unit ball of $  \mathfrak{A} $ and let $ \ph $ be a faithful
normal
state on $  \mathfrak{A} $.
Then the $ s $-topology and the topology generated by
$ \|\cdot\|_\ph $ coincide on $  \mathfrak{A} _{ 1 } $.
\end{lemma}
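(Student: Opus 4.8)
The statement to prove is Lemma~\ref{lem:top1}: on the unit ball $\mathfrak{A}_1$, the $s$-topology (generated by all seminorms $\mathrm{s}_\psi$, $\psi \in \mathfrak{A}_*^+$) coincides with the single-seminorm topology generated by $\|\cdot\|_\ph$, where $\ph$ is a fixed faithful normal state.

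The plan is to show the two inclusions of topologies separately. One direction is immediate: since $\ph \in \mathfrak{A}_*^+$, the seminorm $\|\cdot\|_\ph = \mathrm{s}_\ph$ is one of the seminorms defining the $s$-topology, so the $\|\cdot\|_\ph$-topology is coarser than the $s$-topology on all of $\mathfrak{A}$, hence a fortiori on $\mathfrak{A}_1$. The content is in the reverse inclusion: on $\mathfrak{A}_1$, convergence in $\|\cdot\|_\ph$ implies convergence in $\mathrm{s}_\psi$ for every $\psi \in \mathfrak{A}_*^+$.

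For the reverse inclusion, I would fix $\psi \in \mathfrak{A}_*^+$ and a net $(x_\alpha) \subset \mathfrak{A}_1$ with $\|x_\alpha - x\|_\ph \to 0$, where $x \in \mathfrak{A}_1$, and aim to show $\mathrm{s}_\psi(x_\alpha - x) \to 0$. Writing $y_\alpha := x_\alpha - x$, note the $y_\alpha$ are bounded (norm $\le 2$) and $\sk{\ph, y_\alpha^* y_\alpha} \to 0$. The key tool is a Radon--Nikodym/domination argument for normal positive functionals: because $\ph$ is faithful, one wants to compare $\psi$ with $\ph$. The cleanest route is via the GNS representations. Let $(\pi_\ph, \Hphi, \xi_\ph)$ be the GNS triple for $\ph$; since $\ph$ is faithful, $\pi_\ph$ is a faithful normal $*$-representation, so we may identify $\mathfrak{A}$ with the von Neumann algebra $\pi_\ph(\mathfrak{A}) \subset \L(\Hphi)$ with cyclic and separating vector $\xi_\ph$. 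The normal state $\psi$ then corresponds to a normal state on $\pi_\ph(\mathfrak{A})$; the space of normal functionals is the $\|\cdot\|$-closed linear span of vector functionals $\om_{\eta,\zeta}(\cdot) = \sk{\cdot\,\eta,\zeta}$. More usefully, since $\xi_\ph$ is cyclic for $\pi_\ph(\mathfrak{A})$ it is separating for the commutant, and every normal state $\psi$ on $\pi_\ph(\mathfrak{A})$ can be written (or approximated in norm) using vectors of the form $a'\xi_\ph$ with $a'$ in the commutant $\pi_\ph(\mathfrak{A})'$ — this is the standard fact that cyclic vectors for the commutant are dense, combined with normality. Thus given $\e > 0$ choose $\eta \in \Hphi$ with $\|\psi - \om_\eta\|< \e$ where $\om_\eta(a) = \sk{a\eta,\eta}$, and then choose $a' \in \pi_\ph(\mathfrak{A})'$ with $\|\eta - a'\xi_\ph\| $ small, so that up to $\e$-errors (controlled using $\|y_\alpha\| \le 2$) we have $\sk{\psi, y_\alpha^* y_\alpha} \approx \|\pi_\ph(y_\alpha) a'\xi_\ph\|^2 = \|a'\pi_\ph(y_\alpha)\xi_\ph\|^2 \le \|a'\|^2 \|\pi_\ph(y_\alpha)\xi_\ph\|^2 = \|a'\|^2 \sk{\ph, y_\alpha^* y_\alpha} \to 0$. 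Taking $\e \to 0$ finishes the argument. The main obstacle is precisely this approximation step — making rigorous that an arbitrary normal positive functional $\psi$ can be dominated, modulo small norm errors, by a bounded multiple of $\ph$ after passing to the GNS space — and keeping the boundedness of the net $(x_\alpha)$ in play so that the $\e$-perturbations in $\psi$ translate into genuinely small errors in $\mathrm{s}_\psi(y_\alpha)$ uniformly in $\alpha$. Once that is in place the limit is routine.
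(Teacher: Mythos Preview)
Your proposal is correct. The paper's own proof is a two-line citation: it invokes Takesaki \cite[III.5.3]{Takesaki} to identify the $\|\cdot\|_\ph$-topology on $\mathfrak{A}_1$ with the $\sigma$-strong operator topology, and then Sakai \cite[1.15.2]{Sakai} to identify the latter with the $s$-topology. Your argument instead unpacks the content of the first reference directly: passing to the faithful GNS representation, approximating an arbitrary $\psi\in\mathfrak{A}_*^+$ by vector functionals associated to vectors of the form $a'\xi_\ph$ with $a'\in\pi_\ph(\mathfrak{A})'$ (using that $\xi_\ph$ is separating, hence cyclic for the commutant), and then using the commutation $\pi_\ph(y_\alpha)a'\xi_\ph=a'\pi_\ph(y_\alpha)\xi_\ph$ together with the uniform norm bound on $y_\alpha$ is precisely the mechanism behind Takesaki's lemma. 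So the underlying mathematics is the same; you give it explicitly, the paper gives it by reference. One minor point: your intermediate step ``choose $\eta$ with $\|\psi-\omega_\eta\|<\e$'' tacitly uses that every normal positive functional is a vector state in a representation with cyclic separating vector (a standard-form fact). If you prefer to avoid that appeal, simply approximate $\psi$ in norm by a finite sum $\sum_{n\le N}\omega_{\eta_n}$ (always available since $\psi=\sum_n\omega_{\eta_n}$ with $\sum\|\eta_n\|^2<\infty$) and run the commutant bound termwise; the boundedness of $(y_\alpha)$ still controls the tail uniformly.
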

\begin{proof}
From Takesaki \cite[III.5.3]{Takesaki} it follows that the $ \| \cdot \|_{ \ph } $-topology is equivalent on $ \mk{A}_{ 1 } $ to the $ \sigma$-strong operator topology (which is sometimes called the strongest operator topology).
By Sakai \cite[1.15.2]{Sakai} this topology is equivalent to the $ s ( \mk{A} , \mk{A} _{ * } )$-topology on
$ \mk{A} _{ 1 } $.
\end{proof}

We now proceed to the main result of this section.

\begin{thm}\label{thm:spec}
 Let $\S\subset \L(\A,\s^*)$ be a bounded semigroup of $\s^*$-continuous operators on $\A$ and let $\ph\in\A_*$ be a faithful normal state on $\A$ satisfying
\begin{equation}
 \label{eq:ungleichung2}
\sk{\varphi,(Sx)^*(Sx)}\le \sk{\varphi,x^*x}\qquad \text{ for all } x\in\A, S\in\S.
\end{equation}
Then $\SS:=\ol{\S}^{\L _{ s } (   \mathfrak{A} ,  \sigma ^{ * }   )}$ is a compact semitopological semigroup whose minimal ideal $M(\SS)$ is a compact topological group. The unit $P$ of this group is the unique minimal projection of $\SS$.
\end{thm}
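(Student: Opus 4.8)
\emph{Plan of proof.} The idea is to pass, via the GNS construction for $\ph$, to the Hilbert space $\Hphi$, to apply there Theorem~\ref{theo:JDLG} together with the classical theorem of Jacobs \cite[Satz~5.1]{jacobs57} (for a contraction semigroup on a Hilbert space the minimal ideal of its weak-operator closure is a compact topological group, with unit the orthogonal projection onto the reversible subspace), and then to transport this structure back to $\SS$. Write $\iota\colon\A\to\Hphi$, $x\mapsto\hat x$, for the canonical map; by faithfulness of $\ph$ it is injective, it has dense range, and $\|\iota x\|_{\Hphi}=\|x\|_\ph$, $\sk{\iota x,\iota y}_{\Hphi}=\sk{\ph,y^*x}$. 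On norm-bounded subsets of $\A$ the map $\iota$ is continuous from the $\s^*$-topology to the weak topology of $\Hphi$, since for bounded $x$ the scalars $\sk{\iota x,\hat y}_{\Hphi}=\sk{\ph,y^*x}$ depend $\s^*$-continuously on $x$ and the vectors $\hat y$ are dense.

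The first step I would carry out is an \emph{automatic normality} observation: every bounded $T\in\L(\A)$ with $\sk{\ph,(Tx)^*(Tx)}\le\sk{\ph,x^*x}$ for all $x$ induces a well-defined linear contraction $\tilde T$ on $\Hphi$ with $\tilde T\iota=\iota T$, and is automatically $\s^*$-continuous. Well-definedness and contractivity of $\tilde T$ are immediate from the inequality (applied to $x-x'$ and to $x$). For normality it suffices, by the Krein--Smulian theorem, to show $T$ is $\s^*$-to-$\s^*$ continuous on the unit ball $\A_1$: if $x_\alpha\to x$ in $\A_1$ for the $\s^*$-topology, then $\iota x_\alpha\weak\iota x$ in $\Hphi$, hence $\iota(Tx_\alpha)=\tilde T\iota x_\alpha\weak\tilde T\iota x=\iota(Tx)$, and passing along subnets inside the $\s^*$-compact ball $\|T\|\cdot\A_1$ and using injectivity of $\iota$ forces $Tx_\alpha\to Tx$ in $\s^*$.

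Next one checks that $\SS$ has the claimed properties. Since $\S$ is bounded, Banach--Alaoglu gives that every orbit $\{Sx:S\in\S\}$ is relatively $\s^*$-compact, so $\SS$ is compact in $\L_s(\A,\s^*)$; the standard closure argument (left multiplication by a $\s^*$-continuous operator and right multiplication by any operator are $\L_s(\A,\s^*)$-continuous) shows $\SS$ is a subsemigroup. Moreover every $S\in\SS$ satisfies \eqref{eq:ungleichung2}: writing $S=\lim_\alpha S_\alpha$ with $S_\alpha\in\S$ and fixing $x$, the net $\iota(S_\alpha x)$ is bounded by $\|x\|_\ph$ and converges weakly to $\iota(Sx)$, so weak lower semicontinuity of the norm gives $\sk{\ph,(Sx)^*(Sx)}\le\sk{\ph,x^*x}$. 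By the previous paragraph each $S\in\SS$ is therefore $\s^*$-continuous, so $\SS\subseteq\L(\A,\s^*)$ and $\SS$ is semitopological by Lemma~\ref{lemma2}; thus $\SS$ is a compact semitopological semigroup and each $S\in\SS$ yields a contraction $\tilde S$ on $\Hphi$.

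Finally, let $j\colon\SS\to\L(\Hphi)$, $S\mapsto\tilde S$. It is a semigroup homomorphism ($\widetilde{ST}\iota=\iota ST=\tilde S\tilde T\iota$ on the dense set $\iota(\A)$), it is injective (if $\tilde S=\tilde T$ then $\|(S-T)x\|_\ph=0$ for all $x$, hence $S=T$ since $\ph$ is faithful), and it is continuous from $\L_s(\A,\s^*)$ to the weak operator topology of $\L(\Hphi)$ (the coefficients $\sk{\tilde S\hat x,\hat y}_{\Hphi}=\sk{\ph,y^*(Sx)}$ depend $\s^*$-continuously on $Sx$ and $\|\tilde S\|\le1$). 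As $\SS$ is compact and the unit ball of $\L(\Hphi)$ is Hausdorff for the weak operator topology, $j$ is a homeomorphism onto $j(\SS)$, which is then a closed subsemigroup of that unit ball equal to the weak-operator closure of the contraction semigroup $j(\S)$. By Jacobs' theorem quoted above, the minimal ideal of this closure is a compact topological group whose unit is a projection; transporting back along $j^{-1}$ shows that $\SS$ has a minimal ideal $M(\SS)$ which is a compact topological group with unit $P$, and $P^2=P$ because $j$ is an injective homomorphism. The uniqueness of $P$ as a minimal projection of $\SS$ is then the general fact from the structure theory of compact semigroups that the minimal (primitive) idempotents are exactly those lying in the kernel $M(\SS)$, which, being a group, contains a single idempotent. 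I expect the genuine obstacle to be the automatic normality step together with the verification that $\SS$ remains within the $\s^*$-continuous, $\|\cdot\|_\ph$-contractive operators, so that $j$ is defined on all of $\SS$ and is a homeomorphism onto a closed set; the remaining ingredients are bookkeeping plus the cited Hilbert space and compact semigroup results (Jacobs \cite{jacobs57}, Ellis \cite{ellis}).
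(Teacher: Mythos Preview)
Your proof is correct and follows the same overall strategy as the paper: pass to the GNS Hilbert space $H_\ph$, verify that the extension map $j=\theta_\ph$ is a continuous injective semigroup homomorphism, apply the Hilbert-space JDLG theorem there, and pull the group structure of the minimal ideal back to $\SS$.

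The one place where your argument genuinely differs is the verification that every $T\in\SS$ is $\s^*$-continuous. The paper uses Sakai's criterion (normality is equivalent to $Tx_\beta\to 0$ for decreasing positive nets with infimum $0$) together with the uniform-in-$\alpha$ estimate $\|T_\alpha x_\beta\|_\ph^2\le\sk{\ph,x_\beta}\to 0$ and Lemma~\ref{lem:top1}. Your ``automatic normality'' route---first extend inequality~\eqref{eq:ungleichung2} to $\SS$ by weak lower semicontinuity of the Hilbert norm, then argue that any bounded $\|\cdot\|_\ph$-contraction is $\s^*$-continuous via the injectivity of $\iota$ and a compactness/subnet argument on balls---is a legitimate and somewhat slicker alternative. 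It has the pleasant feature of isolating a reusable lemma (``$\|\cdot\|_\ph$-contractive $\Rightarrow$ normal'' for faithful $\ph$), whereas the paper's argument is tailored to the situation of approximating $T$ by $T_\alpha\in\S$. Both arguments rely on the same underlying fact, namely that $\iota$ identifies the $\s^*$-topology on bounded sets with (a restriction of) the weak topology of $H_\ph$; the paper packages this via Lemma~\ref{lem:top1}, you via Krein--Smulian and direct subnet extraction.
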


\begin{proof}
We first show that
\begin{equation}\label{eq:ungleichungGen}
\sk{\varphi,(Tx)^*(Tx)}\le\sk{\varphi,x^*x}
\end{equation}
holds for all $x\in\A$ and $T\in\SS$.
Let $x\in\A$ and $T\in\SS$. Then there is a net $(T_{\a})\subset\S$ with $\Ta\to T$ in the weak$^*$ operator topology. By Lemma \ref{lemma:Sakai_1.7.8} we have
\begin{align*}
 (\Ta x)^*&\xrightarrow[]{\s^*}(Tx)^*,\\
 (\Ta x)^*(Tx)&\xrightarrow[]{\s^*}(Tx)^*(Tx), \text{ and}\\
(Tx)^*(\Ta x)&\xrightarrow[]{\s^*}(Tx)^*(Tx).
\end{align*}
Hence,
\begin{align*}
 0&\le \sk{\ph,(\Ta x-Tx)^*(\Ta x-Tx)}\\
&=\sk{\ph,(\Ta x)^*(\Ta x)}-\sk{\ph,(\Ta x)^*(Tx)}-\sk{\ph,(T x)^*(\Ta x)}+\sk{\ph,(Tx)^*(Tx)}\\
&\le\sk{\ph,x^*x}-\sk{\ph,(\Ta x)^*(Tx)}-\sk{\ph,(T x)^*(\Ta x)}+\sk{\ph,(Tx)^*(Tx)}\\
&\to\sk{\ph,x^*x}-\sk{\ph,(T x)^*(Tx)}-\sk{\ph,(T x)^*(T x)}+\sk{\ph,(Tx)^*(Tx)}\\
&=\sk{\ph,x^*x}-\sk{\ph,(T x)^*(Tx)},
\end{align*}
and thus
$$\sk{\ph,(T x)^*(Tx)}\le\sk{\ph,x^*x}.$$

Denoting by $i:\A_{\ph}\hookrightarrow\Hphi$ the natural injection map, the extensions $\Tphi\in\L(\Hphi)$ of the operators $T\in\L(\A)$ make the following diagram commutative.

\begin{center}
\quad\begin{xy}
 \xymatrix{
     \Hphi \ar[r]^{\Tphi}    &   \Hphi   \\
     \Aphi \ar[r]^T   \ar[u]^i           &   \Aphi   \ar[u]^i
 }
\end{xy}
 \end{center}

By inequality (\ref{eq:ungleichungGen}), the semigroup $\SS$ extends to a contraction semigroup $\SS_{\ph}:=\{S_\ph: S\in\SS\}$ on $\Hphi$, i.e.,
$$\|S_\ph \xi\|_{\Hphi}\le \|\xi\|_{\Hphi}\quad \text{ for all } \;  \xi\in\Hphi,\; S\in\SS.$$

Second, we show that endowing $\ms{L}(H_\ph)$ with the weak operator topology, the mapping $\theta_{\varphi}:\SS\to\SS_{\ph}$, $S\mapsto S_\ph$ turns into a continuous semigroup isomorphism.

The only non-obvious statement is the continuity of the mapping $\theta_\ph$. 
To this end, let 
$ \Ta \xrightarrow []{\L _{ s } (   \mathfrak{A} ,  \sigma ^{ * }   )}T$ and $x,y\in\Aphi$. By Lemma \ref{lemma:Sakai_1.7.8} we obtain $y^*\Ta x\xrightarrow []{\s^*} y^* Tx$ and thus
 \begin{align*}
\sk{\theta_{\varphi}(\Ta)i(x),i(y)}_{\Hphi}&=\sk{\Ta x,y}_{\ph}=\sk{\varphi,y^*\Ta x}_{\A_*,\A}\\
&\to \sk{\varphi,y^*T x}_{\A_*,\A}=\sk{T x,y}_{\ph}=\sk{\theta_{\varphi}(T)i(x),i(y)}_{\Hphi}.
 \end{align*}

The norm boundedness of $\SS_{\varphi}$ and the density of $i(\Aphi)$ in $\Hphi$ imply
$$\sk{\theta_{\varphi}(\Ta)\xi,\zeta}_{\Hphi}\to \sk{\theta_{\varphi}(T)\xi,\zeta}_{\Hphi}$$
for all $\xi,\zeta\in\Hphi$, yielding the desired continuity.

Third, we show that $\SS$ is a subsemigroup of $\L(\A,\s^*)$. Let $(T_{\a})_{\a\in A}\subset\S$ be a net with $T_{\a}\xrightarrow []{\L _{ s } (   \mathfrak{A} ,  \sigma ^{ * }   )}T\in\L(\A)$. 
To show that $T$ is $\s^*$-continuous, it suffices to prove that $ T x_{ \b } \xrightarrow[] { \s^* } 0 $ for all decreasing nets
$(x_{\b})_{\b\in B}$ of positive elements with $\|x_\beta\| \leq 1 $ and 
$ \inf _{ \beta } x _{ \beta } = 0 $ (see Sakai \cite[Theorem 1.13.2]{Sakai}). 
Let $(x_{\b})_{\b\in B}$ be such a net. It follows from Sakai \cite[1.13.2]{Sakai} that $\inf_\beta \sk{\ph,x_\beta}=\sk{\ph,\inf_\beta x_\beta}=0$.
By (\ref{eq:ungleichungGen}) we have
\[
	\|T_{\a}x_{\b}\|^2_{\ph}=\sk{\ph,(T_{\a}x_{\b})^*(T_{\a}x_{\b})} 
		\le \sk{\ph,x_{\b}^*x_{\b}} \leq \sk{\ph,x_{\b}}\to 0
\]
uniformly in $\a\in A$. 
From Lemma \ref{lem:top1} we deduce
$$ T_{\a}x_{\b}\xrightarrow[\b]{s(\A,\A_*)}0$$
uniformly in $\a\in A$. 
Since the $s(\A,\A_*)$-topology is finer than the $\s^*$-topology, we obtain
$$ T_{\a}x_{\b}\xrightarrow[\b]{\s^*}0$$
uniformly in $\a\in A$. 

This uniform convergence together with the fact that
$$T_{\a}x_{\b}\xrightarrow[\a]{\s^*}Tx_{\b}$$
for all $\b\in B$ implies  $Tx_{\b}\xrightarrow[\b]{\s^*}0$.
Hence $\SS$ is semitopological by Lemma \ref{lemma2}.

To show that $\SS$ is compact, notice that $\ol{\S}^{\L _{ s } (   \mathfrak{A} ,  \sigma ^{ * }   )}$ is compact if and only if $\ol{\S x}^{\s^*}$ is compact for all $x\in\A$. One direction follows directly from the continuity of the mapping $T\mapsto Tx$ for every $x\in \A$ and the converse implication is a consequence of the inclusion
$$(\S,\L_s(\A,\sigma^*)\subset \prod_{x\in\A}(\ol{\S x}^{\sigma^*},\sigma^*)$$
and Tychonoff's theorem.
Since $\S$ is norm-bounded, the claim follows from the Banach-Alaoglu theorem. By DeLeeuw and Glicksberg \cite[Theorem 2.3]{dlg61}, this semigroup has a minimal ideal $M(\SS)$.

Since $\theta_{\varphi}$ is a semigroup isomorphism, it maps the minimal ideal $M(\SS)$ of $\SS$ onto the minimal ideal $M(\SS_{\ph})$ of $\SS_{\varphi}$. 
The fact that $\SS$ is compact implies that $\SS_{\varphi}=\theta_{\varphi}(\SS)$ is a compact semitopological semigroup of contractions.

Hence any minimal projection in $\SS_\ph$ is also contractive and thus orthogonal. It follows that for any minimal projection $P_\ph$ we have
\[
\ran P_\ph=\left\{\xi\in H_\ph\,:\, \|S_\ph\xi\|=\|\xi\|\,  \text{ for all } \;  S_\ph\in\SS_\ph \right\}.
\]
To see this, let $\xi=P_\ph \xi\in \ran P_\ph$ and let $S_\ph\in\SS_\ph$. Since $P_\ph$ is a minimal projection the set $P_\ph\SS_\ph P_\ph$ is a group with unit $P_\ph$. Therefore there exists some $T_\ph\in\SS_\ph$ such that $(P_\ph T_\ph P_\ph)(P_\ph S_\ph P_\ph)=P_\ph$. This implies 
$$\|\xi\|=\|P_\ph \xi\|=\|P_\ph T_\ph P_\ph S_\ph P_\ph \xi\|\le\|S_\ph \xi\|\le\|\xi\|$$
and thus $\|S_\ph \xi\|=\|\xi\|$.

Conversely, if $\|S_\ph \xi\|=\|\xi\|$ for all $S_\ph \in\SS_\ph$ then in particular $\|P_\ph\xi\|=\|\xi\|$. But this yields $\xi=P_\ph\xi$, since otherwise the strict convexity of $H_\ph$ would imply
$$\|P_\ph\xi\|=\left\|\frac{1}{2} P_\ph(\xi+P_\ph \xi)\right\|\le\left\|\frac{1}{2} (\xi+P_\ph\xi)\right\|<\|\xi\|=\|P_\ph\xi\|.$$

Hence there is a unique minimal projection, and thus the minimal ideal $M(\SS_\ph)$ is a group with unit the unique minimal projection $P_\ph$ of $\SS_\ph$. Therefore the minimal ideal $M(\SS)$ is itself a group, with unit the unique minimal projection $P$. Since $M(\SS)=P\SS P$, we have that $M(\SS)$ is compact. The theorem of Ellis 
\cite[Theorem 2]{ellis} finally implies that $M(\SS)$ is a topological group.
\end{proof}

\section{Case of a faithful family of normal states}\label{sect:gen}
In this section, let $\ms{S}\subset\ms{L}(\mk{A},\sigma^*)$ be a bounded semigroup of $\sigma^*$-continuous operators on $\mk{A}$, and let $\Phi\subset\mk{A}_*$ be a faithful family of normal states on $\mk{A}$ satisfying
\[
\begin{array}{lr}
\langle\ph,(Sx)^*(Sx)\rangle\leq\langle\ph,x^*x\rangle&  \text{ for all } \;  x\in\mk{A}, S\in\ms{S},\ph\in\Phi.
\end{array}
\]
Each $\ph\in\Phi$ induces a seminorm $x\mapsto\|x\|_\ph:=\langle\ph,x^*x\rangle^{1/2}$ on $\mk{A}$.
Let us denote by $\mc{T}_\Phi$ the topology generated by this family of seminorms.

The following is a well known generalisation of Lemma \ref{lem:top1}.
\begin{lemma}\label{lem:top2}
Let $  \mathfrak{A} _{ 1 } $ be the unit ball of $  \mathfrak{A} $ and let $ \Phi $ be a faithful
family of normal states on $  \mathfrak{A} $.
Then the $ s $-topology and 
$\mc{T}_\Phi$ coincide on $  \mathfrak{A} _{ 1 } $.
\end{lemma}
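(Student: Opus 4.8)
The plan is to reduce the statement to the single-state case (Lemma \ref{lem:top1}) by passing to a suitable ``average'' or ``sup'' of the states in $\Phi$, or alternatively to argue directly that each topology refines the other on $\A_1$. Let me sketch the direct comparison. One inclusion is immediate and holds on all of $\A$, not just $\A_1$: for each fixed $\ph\in\Phi$ we have $\ph\in\A_*^+$, so $\mr{s}_\ph$ is one of the seminorms defining the $s(\A,\A_*)$-topology; hence $\mc{T}_\Phi$, being generated by a subfamily of the seminorms defining the $s$-topology, is coarser than the $s$-topology. The content of the lemma is therefore the reverse inclusion on the unit ball: on $\A_1$, convergence in $\mc{T}_\Phi$ implies convergence in the $s$-topology.

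For this reverse direction I would use the fact that a faithful family of normal states is ``large enough'' to recover the $s$-topology on bounded sets. Concretely, given an arbitrary $\psi\in\A_*^+$, the goal is to control $\mr{s}_\psi$ on $\A_1$ by finitely many (or countably many, handled by a uniformity argument) of the seminorms $\mr{s}_\ph$, $\ph\in\Phi$. The standard tool here is that the $s$-topology on $\A_1$ coincides with the $\sigma$-strong topology (Sakai \cite[1.15.2]{Sakai}), and that the $\sigma$-strong topology on bounded sets is the one induced by $\A_*^+$; a faithful family of normal states generates, under the operations available in $\A_*$ (taking sums, suprema along increasing sequences, and norm limits), a set that is $\sigma$-weakly dense in $\A_*^+$ in the appropriate sense. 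More precisely, faithfulness of $\Phi$ means the central support of $\sum$-type combinations of elements of $\Phi$ is $\1$, so every normal state is dominated, after passing to a GNS-type representation, by such a combination on the relevant subspace. Since this is precisely the kind of fact the paper labels ``well known,'' I would cite the appropriate passage in Takesaki \cite{Takesaki} or Sakai \cite{Sakai} rather than reprove it; alternatively, one invokes Lemma \ref{lem:top1} applied to a single faithful normal state built from $\Phi$ (e.g., if $\Phi$ is countable, $\ph_0:=\sum_n 2^{-n}\ph_n$ is a faithful normal state and $\mc{T}_\Phi$ and $\|\cdot\|_{\ph_0}$ induce the same uniformity on $\A_1$; the general case reduces to this by a standard argument since $\A_1$ with the $s$-topology has a neighborhood base at $0$ given by finitely many seminorms).

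The main obstacle is the passage from finitely many states in $\Phi$ to an arbitrary normal positive functional $\psi$: one must show that the family $\{\mr{s}_\ph:\ph\in\Phi\}$ is cofinal, in the sense of generating the same bornological/uniform structure on $\A_1$, with the full family $\{\mr{s}_\psi:\psi\in\A_*^+\}$. This is exactly where faithfulness is used and is the only non-formal point; everything else is a routine comparison of topologies together with the cited results. I would therefore structure the proof as: (i) note the easy inclusion $\mc{T}_\Phi\subseteq s$-topology on all of $\A$; (ii) recall via Sakai \cite[1.15.2]{Sakai} and Takesaki \cite[III.5.3]{Takesaki} that on $\A_1$ the $s$-topology agrees with the $\sigma$-strong topology; (iii) use faithfulness of $\Phi$ to dominate an arbitrary $\sigma$-strong neighborhood of $0$ in $\A_1$ by a $\mc{T}_\Phi$-neighborhood, which gives the reverse inclusion and hence equality on $\A_1$.
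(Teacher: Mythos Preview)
Your outline correctly isolates the easy direction (each $\mr{s}_\ph$ is one of the seminorms defining the $s$-topology, so $\mc{T}_\Phi\subseteq s$) and correctly identifies that the whole content lies in the reverse inclusion on $\A_1$. But the resolution you propose for that reverse inclusion has a real gap. Your step~(iii) --- ``use faithfulness of $\Phi$ to dominate an arbitrary $\sigma$-strong neighborhood of $0$ in $\A_1$ by a $\mc{T}_\Phi$-neighborhood'' --- is precisely the statement of the lemma, and you do not prove it; you only say you would cite it away. There is no ready-made citation in Takesaki or Sakai of that form. The reduction to a single faithful state also fails in general: if $\Phi$ is uncountable with no countable faithful subfamily (e.g.\ $\A=\ell^\infty(I)$ for uncountable $I$ and $\Phi=\{\delta_i:i\in I\}$), there is no single $\ph_0=\sum 2^{-n}\ph_n$ to build, and the remark that ``$\A_1$ with the $s$-topology has a neighborhood base at $0$ given by finitely many seminorms'' does not help, since those finitely many seminorms come from $\A_*^+$, not from $\Phi$.

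The paper's argument supplies exactly the missing mechanism, and it is different from a domination argument: it goes through the \emph{support projections} $p_\ph$ of the states $\ph\in\Phi$. One uses (Dixmier \cite[I.4.4]{diximier}) that for a bounded net, $\|x_\alpha\|_\ph\to 0$ if and only if $x_\alpha p_\ph\to 0$ in the strong operator topology. Faithfulness of $\Phi$ is equivalent to $\sup_{\ph\in\Phi}p_\ph=\1$, and on a bounded net the condition ``$x_\alpha p_\ph\to 0$ strongly for every $\ph\in\Phi$'' is then equivalent to ``$x_\alpha\to 0$ strongly''. Finally Sakai \cite[1.15.2]{Sakai} identifies the strong operator topology with the $s$-topology on $\A_1$. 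This is short, uses faithfulness in a concrete way (through $\sup p_\ph=\1$ rather than an unspecified density/domination), and avoids any countability hypothesis on $\Phi$.
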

\begin{proof}
Since $ \Phi $ is a faithful family of normal states on $ \mk{A} $, for every net $x_\alpha$ in $\A$ we have that $ \| x_\alpha  \|_\ph \to 0$ if and only if $x_\alpha p _{ \ph } \to 0 $ in the strong operator topology for all $\ph\in\Phi$, see
Dixmier \cite[I.4.4]{diximier}, where $ p _{ \ph } $ is the support projection of $ \ph\in\Phi $.

Since $ \sup _{ \ph \in \Phi } p _{ \ph } = \1 $, we obtain that $ \| x_\a  \|_\ph \to 0 $ for all
$ \ph \in \Phi$ if and only if $x_\alpha \to 0 $ in the strong operator topology.

By Sakai \cite[1.15.2.]{Sakai}, this topology is  equivalent to the $ s$-topology on $ \mk{A}_{ 1 } $.
\end{proof}

Take $\ph\in\Phi$, denote its support by $p_\ph$, and the orthogonal complement of the support by $p_\ph^{\bot}:=1-p_\ph$. The space $\mk{A}$ can then be represented as the orthogonal sum $\mk{A}=K_\ph\oplus L_\ph$ where
\[
\begin{array}{lr}
K_\ph:=\mk{A}p_\ph\en\text{ and }&L_\ph:=\mk{A}p_\ph^\bot = \{ x \in \mathfrak{A} \colon \langle\ph, x ^{*}x \rangle = 0 \}
			\subseteq \ker \ph . 
\end{array}
\]

Note that $L_\ph$ is the kernel of the seminorm $\|\cdot\|_\ph$, while the same seminorm turns $K_\ph$ into a pre-Hilbert space $\left(K_\ph,\langle\cdot,\cdot\rangle_\ph\right)$ with $\langle x,y\rangle_\ph:=\langle\ph,y^*x\rangle$. The subspaces $L_\ph$ are invariant under $\SS$ since
\[
\|Sx\|_\ph^2=\langle\ph,(Sx)^*(Sx)\rangle\leq\langle\ph,x^*x\rangle=\|x\|_\ph^2=0
\]
for every $\ph\in\Phi,x\in L_\ph$ and $S\in\SS$.

A simple computation shows that for any operator $R\in\ms{L}(\A)$ leaving $L_\ph$ invariant, we have
\begin{equation}\label{eq:norm}
\|R(xp_\ph)\|_\ph=\|Rx\|_\ph
\end{equation}
for all $x\in\A$.

Denote by $\left(H_\ph,\langle\cdot,\cdot\rangle_{H_\ph}\right)$ the completion of $K_\ph$ with respect to $\|\cdot\|_\ph$. 
For each $T\in\SS$ define the operator $T_\ph:K_\ph\to K_\ph$  through
\[
T_\ph x:=(Tx)p_\ph.
\]
Note that, as in the previous section, inequality (\ref{eq:ungleichungGen}) holds for all $T\in\SS$ and $\ph\in\Phi$.
Therefore we have 
$$\|T_\ph x\|_\ph=\|(Tx)p_\ph\|_\ph=\|Tx\|_\ph\leq\|x\|_\ph,$$
implying that each $T_\ph$ is an element of $\ms{L}(K_\ph)$, and hence has a continuous extension to a contraction $\overline{T}_\ph$ on $H_\ph$. Denote by $\theta_\ph$ the mapping $T\mapsto \overline{T}_\ph$ ($T\in\SS$), and by $\SS_\ph\subset\ms{L}(H_\ph)$ its image.


Since $\Phi$ is faithful, the linear span of the spaces $(K_\ph)_{\ph\in\Phi}$ is dense in $\mk{A}$ (see Dixmier \cite[Section I.4.6]{diximier}), 
and thus the family of representations will itself be faithful. 

We now show that each 
$\theta_\ph$ is a continuous representation of the semigroup $\SS$.

\begin{prop}\label{prop:cont}
Endowing $\ms{L}(H_\ph)$ with the weak operator topology turns the mapping $\theta_\ph: \SS\to\SS_\ph$ into a continuous semigroup homomorphism.
\end{prop}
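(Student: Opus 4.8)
The plan is to follow the argument of the single-state case in Theorem \ref{thm:spec}, now carried out for the fixed state $\ph\in\Phi$, with the only new wrinkle coming from the presence of the kernel $L_\ph$ of the seminorm. First I would check that $\theta_\ph$ is an algebraic homomorphism: for $S,T\in\SS$ and $x\in K_\ph$ one has $(ST)_\ph x=(STx)p_\ph$, and since $L_\ph$ is $\SS$-invariant one can insert $p_\ph$ freely, i.e.\ $(STx)p_\ph=(S(Txp_\ph+Txp_\ph^\bot))p_\ph=(S(Tx)p_\ph)p_\ph=S_\ph(T_\ph x)$, using \eqref{eq:norm} or the invariance of $L_\ph$ to absorb the $p_\ph^\bot$-part; passing to the completions $\overline{T}_\ph$ by density and boundedness then gives $\theta_\ph(ST)=\theta_\ph(S)\theta_\ph(T)$. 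That $\theta_\ph$ is well defined into $\L(H_\ph)$, with contractive image, has already been recorded before the statement.

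The substance is continuity. Let $(T_\a)\subset\SS$ be a net with $T_\a\xrightarrow{\L_s(\A,\s^*)}T\in\SS$, and fix $x,y\in K_\ph$. By Lemma \ref{lemma:Sakai_1.7.8} the maps $z\mapsto y^*z$ are $\s^*$-continuous, so $y^*T_\a x\xrightarrow{\s^*}y^*Tx$, whence
\begin{align*}
\sk{\theta_\ph(T_\a)i(x),i(y)}_{H_\ph}
&=\sk{(T_\a x)p_\ph,y}_\ph=\sk{T_\a x,y}_\ph=\sk{\ph,y^*T_\a x}\\
&\to\sk{\ph,y^*Tx}=\sk{Tx,y}_\ph=\sk{\theta_\ph(T)i(x),i(y)}_{H_\ph},
\end{align*}
where the first equality uses $y=yp_\ph$ and $\sk{(T_\a x)p_\ph,yp_\ph}_\ph=\sk{\ph,p_\ph y^* (T_\a x)p_\ph}=\sk{\ph,y^*T_\a x}$ since $p_\ph$ is the support of $\ph$. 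Thus weak convergence of the extended operators holds on the dense subspace $i(K_\ph)\subseteq H_\ph$.

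Finally I would upgrade convergence on a dense set to convergence everywhere by the standard $3\e$-argument: the net $\SS_\ph$ is uniformly bounded by $1$ (contractions), so for arbitrary $\xi,\z\in H_\ph$ one approximates $\xi,\z$ by elements of $i(K_\ph)$, uses the uniform bound on $\overline{T}_\a{}_\ph-\overline{T}_\ph$ (bounded by $2$), and concludes $\sk{\theta_\ph(T_\a)\xi,\z}_{H_\ph}\to\sk{\theta_\ph(T)\xi,\z}_{H_\ph}$; this is exactly the density/boundedness step used in the proof of Theorem \ref{thm:spec}. The only point requiring care — and the one I would treat as the main obstacle — is the repeated juggling of the support projection $p_\ph$: one must consistently use that $L_\ph\subseteq\ker\ph$ and that $L_\ph$ is $\SS$-invariant so that inserting or deleting $p_\ph$ inside the inner products $\sk{\ph,y^*T_\a x}$ and in the homomorphism computation is harmless. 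Everything else is a verbatim transcription of the argument already given for a single faithful normal state.
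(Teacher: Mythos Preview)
Your proposal is correct and follows essentially the same approach as the paper: the homomorphism check via the $\SS$-invariance of $L_\ph$ and the identity $L_\ph p_\ph=\{0\}$, the continuity on $K_\ph\times K_\ph$ via Lemma~\ref{lemma:Sakai_1.7.8}, and the extension to all of $H_\ph$ by density and uniform boundedness are exactly the paper's steps. The only cosmetic difference is that the paper keeps the factor $p_\ph$ in the continuity computation (writing $\langle\ph,y^*(T_\a x)p_\ph\rangle$), whereas you absorb it using the support property $\ph(z)=\ph(zp_\ph)$; both are equally valid.
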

\begin{proof}
We show first that $\theta_\ph$ is  a semigroup homomorphism. Let $S, S'\in\SS$ and $x\in K_\ph$.
Then
\begin{eqnarray*}
(S_\ph S_\ph')x&=&S_\ph(S_\ph' x)=(S((S'x)p_\ph))p_\ph=(S((S'x)(1-p_\ph^\bot)))p_\ph
\\&=&(S(S'x))p_\ph-(S((S'x)p_\ph^\bot))p_\ph=(SS')_\ph x-(S((S'x)p_\ph^\bot))p_\ph.
\end{eqnarray*}

Note that $(S'x)p_\ph^\bot\in L_\ph$. By the $\SS$-invariance of $L_\ph$ we have
\[
(S((S'x)p_\ph^\bot))p_\ph\in L_\ph p_\ph=\left\{0\right\}.
\]
Thus $(S_\ph S_\ph')=(SS')_\ph$ on $K_\ph$, and by density also $(\overline{S}_\ph \overline{S'}_\ph)=(\overline{SS'})_\ph$.

For the proof of continuity, let $T_\alpha\xrightarrow{\L _{ s } (   \mathfrak{A} ,  \sigma ^{ * }   )} T$ and $x,y\in K_\ph$. By Lemma \ref{lemma:Sakai_1.7.8} we obtain $y^*(T_\alpha x)p_\ph\xrightarrow{\sigma^*} y^*(Tx)p_\ph$ and thus

\begin{eqnarray*}
\langle\theta_\ph(T_\alpha)x,y\rangle_{H_\ph}&=&\langle (T_\alpha x)p_\ph,y\rangle_\ph =\langle \ph,y^*(T_\alpha x)p_\ph\rangle_{\mk{A}_*,\mk{A}}\\
&\to&\langle\ph,y^*(Tx)p_\ph \rangle_{\mk{A}_*,\mk{A}}=\langle (Tx)p_\ph,y\rangle_\ph=\langle \theta_\ph(T)x,y\rangle_{H_\ph}.
\end{eqnarray*}

The norm boundedness of $\SS$ and the density of $K_\ph$ in $H_\ph$ implies
\[
\langle \theta_\ph(T_\alpha)\xi,\zeta\rangle_{H_\ph}\to\langle \theta_\ph(T)\xi,\zeta\rangle_{\Hphi}
\]
for all $\xi,\zeta\in H_\ph$, yielding the desired continuity.
\end{proof}

We now proceed to prove Theorem \ref{thm:main}.
\begin{proof}[Proof of Theorem \ref{thm:main}]
The proof that $\SS$ is a compact semitopological semigroup is analogous to the proof of Theorem \ref{thm:spec}. For the uniform in $\alpha$ convergence of $(T_\alpha p_\beta)_{\beta\in B}$ to $0$ in the $s(\mk{A},\mk{A}_*)$-topology, 
note that this net converges  in $\ph$-seminorm uniformly in $\alpha$ for each $\ph\in\Phi$. Hence it converges uniformly in $\alpha$ with respect to the $\mc{T}_\Phi$ topology. Lemma \ref{lem:top2} concludes the argument.

Since $\theta_\ph$ is a continuous semigroup homomorphism by Proposition \ref{prop:cont}, we obtain that $\SS_\ph$ is a compact semitopological semigroup of contractions for each $\ph\in\Phi$.
Its minimal ideal $M(\SS_\ph)$ is then a compact topological group with a unique minimal projection as unit. 
As $\theta_\ph$ is a semigroup homomorphism, we have $\theta_\ph(M(\SS))=M(\SS_\ph)$.
If $P,Q\in M(\SS)$ are two minimal projections, then the corresponding minimal projections
$\overline{P}_\ph,\overline{Q}_\ph$ in $M(\SS_\ph)$ coincide, hence $P|_{K_\ph}=Q|_{K_\ph}$ for all $\ph\in\Phi$. Since $\lin \{K_\ph\}_{\ph\in\Phi}$ is dense in $\A$, we obtain $P=Q$.
\end{proof}

As a special case we obtain the main result of K\"ummerer and Nagel \cite{kummerer-nagel}.

\begin{cor}
 Under the assumptions of Theorem \ref{thm:main}, the semigroup $\S$ is {weak* mean ergodic}, i.e., the weak* operator closure $\ol{\co\S}^{\L _{ s } (   \mathfrak{A} ,  \sigma ^{ * }   )}$ of the convex hull $\co(\S)$ of $\S$ contains a projection $P$ satisfying
\[
	TP=PT=P\quad \text{ for all } \;  T\in\S
\]
and
\[
	\ran P = \{ x \in \BA \colon T x = x \; \text{ for all $ T \in \S $ } \}.
\]
\end{cor}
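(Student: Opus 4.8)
The plan is to deduce the corollary from Theorem~\ref{thm:main} by realising $P$ as the Haar mean over the minimal ideal. First I would observe that $\co\S$ again satisfies the hypotheses of Theorem~\ref{thm:main}: it is a bounded semigroup of $\s^*$-continuous operators, and \eqref{eq:ungleichung} is inherited by a convex combination $R=\sum_i\lambda_i S_i$ because $\|Rx\|_\ph=\bigl\|\textstyle\sum_i\lambda_i S_i x\bigr\|_\ph\le\sum_i\lambda_i\|S_i x\|_\ph\le\|x\|_\ph$ for every $\ph\in\Phi$, by the triangle inequality for the seminorm $\|\cdot\|_\ph$. Hence $\ol{\co\S}^{\L_s(\A,\s^*)}$ is a compact semitopological semigroup; as the closure of a convex set for a linear topology it is moreover convex, and it contains $\SS$ and in particular the minimal ideal $G:=M(\SS)$, which by Theorem~\ref{thm:main} is a compact \emph{topological} group. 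Let $c:=\sup_{T\in\SS}\|T\|<\infty$ and let $m$ be the normalised Haar measure on $G$, which is both left- and right-invariant since $G$ is compact.

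Next I would define $P$ as the weak operator integral $P:=\int_G g\,dm(g)$, that is, $\sk{\psi,Px}:=\int_G\sk{\psi,gx}\,dm(g)$ for $x\in\A$ and $\psi\in\A_*$; the integrand is continuous on the compact group $G$ and bounded by $c\|\psi\|\,\|x\|$, so $Px$ is a well-defined element of $\A=(\A_*)^*$ and $\|P\|\le c$. Since $G\subseteq\ol{\co\S}^{\L_s(\A,\s^*)}$ and the latter is compact and convex, $P$ lies in $\ol{\co G}^{\L_s(\A,\s^*)}\subseteq\ol{\co\S}^{\L_s(\A,\s^*)}$ (as a Gelfand--Pettis integral, or directly as the $\L_s(\A,\s^*)$-limit of the Riemann sums $\sum_i m(A_i)g_i\in\co G$ along the net of finite Borel partitions $(A_i)$ of $G$). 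The decisive computation is that for $g_0\in G$ the substitutions $h=g_0g$ and $h=gg_0$ in the integral are measure-preserving --- this is exactly where the \emph{topological} group structure provided by Ellis's theorem enters --- whence $g_0P=P=Pg_0$. One cannot simply put $P$ in place of $g_0$ (in general $P\notin G$), but by linearity $P^2=\int_G gP\,dm(g)=\int_G P\,dm(g)=P$, so $P$ is a projection.

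It then remains to check the stated properties. Fix $T\in\S\subseteq\SS$. Since $G=M(\SS)$ is a two-sided ideal of $\SS$, both $TP$ and $PT$ lie in $G$; applying $g_0P=P$ with $g_0=TP$ gives $P=(TP)P=TP^2=TP$, and applying $Pg_0=P$ with $g_0=PT$ gives $P=P(PT)=P^2T=PT$. Finally, write $P$ as an $\L_s(\A,\s^*)$-limit of convex combinations $R_\alpha=\sum_i\lambda_i^{(\alpha)}T_i^{(\alpha)}$ with $T_i^{(\alpha)}\in\S$: if $Tx=x$ for all $T\in\S$ then $R_\alpha x=x$ for all $\alpha$, hence $Px=x$; conversely $Px=x$ implies $Tx=(TP)x=Px=x$ for all $T\in\S$, so $\ran P=\{x\in\A\colon Tx=x\text{ for all }T\in\S\}$ using $P^2=P$. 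I expect the only genuinely delicate step to be the construction of $P$ --- making sure the Haar average exists as an operator inside the closed convex hull --- which rests on the compactness and convexity of $\ol{\co\S}^{\L_s(\A,\s^*)}$ (i.e.\ the applicability of Theorem~\ref{thm:main} to $\co\S$) together with the invariant probability measure on the \emph{topological} group $G$; the rest is routine bookkeeping.
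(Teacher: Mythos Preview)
Your construction via the Haar average over $G=M(\SS)$ is a genuinely different route from the paper's. The paper applies Theorem~\ref{thm:main} directly to the semigroup $\co\S$: the closure $\SS':=\ol{\co\S}^{\L_s(\A,\s^*)}$ is then a compact semitopological semigroup which is also \emph{convex}, so its minimal ideal $M(\SS')=P\SS'$ is a compact \emph{convex} group, and such a group is necessarily a singleton by \cite[Lemma~7.1]{dlg61}. Thus $M(\SS')=\{P\}$, and $TP=PT=P$ for $T\in\S\subseteq\SS'$ is immediate from the ideal property; no integration is needed. Your approach trades this structural lemma for an explicit construction of $P$, which is instructive but slightly longer.

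There is, however, a real gap in your argument for $TP=P$. You write ``since $G=M(\SS)$ is a two-sided ideal of $\SS$, both $TP$ and $PT$ lie in $G$'', but this uses the ideal property for the product $T\cdot P$, which would require $P\in\SS$. Your $P$ is the Haar barycentre of $G$ and lies only in $\ol{\co\S}$, not in $\SS=\ol{\S}$ in general. A concrete counterexample: take $\A=\C^2$ with the normalised trace and $\S=\{I,T\}$ where $T$ swaps the coordinates; then $\SS=G=\{I,T\}$ and $P=\tfrac12(I+T)\notin\SS$, so $TP=P\notin G$. The repair is easy and stays entirely within your framework: pick any $g\in G$ and use $gP=P$ to write
\[
TP \;=\; T(gP) \;=\; (Tg)P \;=\; P,
\]
since now $Tg\in G$ (here the ideal property does apply, because $T\in\SS$ and $g\in G\subseteq\SS$) and you have already shown $g_0P=P$ for every $g_0\in G$. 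The same insertion of an auxiliary $g\in G$ handles $PT=P$. With this correction the remainder of your proof, including the description of $\ran P$, goes through.
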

\begin{proof}
If the semigroup $\S$ is bounded, consists of $\s^*$-continuous operators on $\A$ and satisfies inequality (\ref{eq:ungleichung2}), then so does the semigroup $\co\S$. By the above considerations
$\SS:= \ol{\co\S}^{\L _{ s } (   \mathfrak{A} ,  \sigma ^{ * }   )}$ is a compact semitopological semigroup and its minimal ideal $M(\SS)=\SS P=P\SS$ is a compact convex group with $P$ its unit. Hence $M(\SS)=\{P\}$ by \cite[Lemma 7.1]{dlg61}, and the result follows.
\end{proof}

\section{The Range and Kernel of $ P $}\label{sect:char}

In the situation of Theorem \ref{thm:main} we call the subspace $\A_r:=\ran P$ the \emph{reversible part} and the subspace 
$\A_s:=\ker P$ the \emph{stable part} of $\A$. 

We now describe these spaces in more detail and start with the reversible part. 


The following result is a consequence of Theorem \ref{theo:JDLG}.

\begin{prop}\label{thm:range-and-kernel}
The range of $P$ can be characterized as
  \begin{align*}
  \ran P&=\{x\in\A \colon \sk{\ph,(Tx)^*(Tx)}=\sk{\ph,x^*x}\en
 	\; \forall\; T\in\SS, \,  \ph\in\Phi   \} \\
	&= \{x\in\A \colon \sk{\ph,(Tx)^*(Ty)}=\sk{\ph,x^*y}\en
 			\; \forall \; T\in\SS, \,  \ph\in\Phi,\, y \in \BA   \}.   
   \end{align*}

\end{prop}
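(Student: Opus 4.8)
The plan is to transport the statement to the Hilbert spaces $H_\ph$ via the representations $\theta_\ph$ constructed in Section~\ref{sect:gen}, where the analogous description of the reversible subspace is supplied by Theorem~\ref{theo:JDLG}. For each $\ph\in\Phi$ let $j_\ph\colon\A\to H_\ph$ be the map $x\mapsto i(xp_\ph)$, the composition of $x\mapsto xp_\ph\in K_\ph$ with the canonical embedding of $K_\ph$ into its completion $H_\ph$. First I would record the elementary compatibilities: $\ker j_\ph=L_\ph$; $\|j_\ph x\|_{H_\ph}=\|x\|_\ph$ and $\sk{j_\ph x,j_\ph y}_{H_\ph}=\sk{\ph,y^*x}$ for all $x,y\in\A$ (both from the support identity $\sk{\ph,p_\ph a p_\ph}=\sk{\ph,a}$); and $j_\ph(Tx)=\overline{T}_\ph\,j_\ph x$ for all $T\in\SS$, which follows because $(Tx)p_\ph=(T(xp_\ph))p_\ph$ by the $\SS$-invariance of $L_\ph$ together with $L_\ph p_\ph=\{0\}$. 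I would also recall that $\theta_\ph(P)=\overline{P}_\ph$ is the unique minimal projection of the compact semitopological semigroup of contractions $\SS_\ph=\theta_\ph(\SS)$, and that, by the Hilbert space theory behind Theorems~\ref{theo:JDLG} and \ref{thm:spec},
\[
\ran\overline{P}_\ph=(H_\ph)_{\mathrm r}=\{\xi\in H_\ph\colon \|\overline{S}_\ph\xi\|=\|\xi\|\ \text{ for all }\ \overline{S}_\ph\in\SS_\ph\}.
\]

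With these preliminaries the proof is a short equivalence. If $x=Px$ then $j_\ph x=j_\ph(Px)=\overline{P}_\ph\,j_\ph x\in\ran\overline{P}_\ph$, so $\overline{T}_\ph$ preserves the norm of $j_\ph x$ for every $T\in\SS$, $\ph\in\Phi$; translating through $j_\ph$ gives $\sk{\ph,(Tx)^*(Tx)}=\sk{\ph,x^*x}$. Moreover, a contraction $C$ on a Hilbert space with $\|C\xi\|=\|\xi\|$ satisfies $C^*C\xi=\xi$, hence $\sk{\overline{T}_\ph j_\ph x,\overline{T}_\ph j_\ph y}_{H_\ph}=\sk{j_\ph x,j_\ph y}_{H_\ph}$ for every $y\in\A$, which after taking complex conjugates reads $\sk{\ph,(Tx)^*(Ty)}=\sk{\ph,x^*y}$. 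Thus $\ran P$ is contained in both sets on the right. Conversely, if $\sk{\ph,(Tx)^*(Tx)}=\sk{\ph,x^*x}$ for all $T\in\SS$ and $\ph\in\Phi$, then $\|\overline{T}_\ph j_\ph x\|=\|j_\ph x\|$ for every $\overline{T}_\ph\in\SS_\ph$ (using surjectivity of $\theta_\ph$), so $j_\ph x\in(H_\ph)_{\mathrm r}=\ran\overline{P}_\ph$ and therefore $j_\ph(Px)=\overline{P}_\ph\,j_\ph x=j_\ph x$, i.e.\ $Px-x\in\ker j_\ph=L_\ph$. Since this holds for every $\ph\in\Phi$, the positive element $(Px-x)^*(Px-x)$ is annihilated by all of $\Phi$; faithfulness and the $C^*$-identity then force $Px=x$. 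Finally the second (inner-product) set is trivially contained in the first by taking $y=x$, so all three sets coincide.

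The only delicate point is the bookkeeping relating $\A$, the pre-Hilbert space $K_\ph=\A p_\ph$ and its completion $H_\ph$ — specifically verifying that $j_\ph$ genuinely intertwines $\SS$ with $\SS_\ph$, that $\ker j_\ph=L_\ph$, and that the $H_\ph$-inner product is computed by $\sk{\ph,y^*x}$; these all rest on the decomposition $\A=K_\ph\oplus L_\ph$, the $\SS$-invariance of $L_\ph$, and the support relation $\sk{\ph,p_\ph a p_\ph}=\sk{\ph,a}$. Everything else — the Hilbert space fact $\|C\xi\|=\|\xi\|\Rightarrow C^*C\xi=\xi$, and the passage from $(Px-x)^*(Px-x)=0$ to $Px=x$ — is standard.
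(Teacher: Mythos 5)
Your proof is correct and takes essentially the same route as the paper's: transport $x$ to the Hilbert spaces $H_\ph$ via the support projections ($x\mapsto xp_\ph$), identify $\ran \overline{P}_\ph$ with the set of vectors whose norm is preserved by $\SS_\ph$ (Theorem \ref{theo:JDLG} / the argument of Theorem \ref{thm:spec}), and pass to the polarized identity by a Cauchy--Schwarz-type argument (your $C^*C\xi=\xi$ step versus the paper's citation of Bourbaki). If anything, you are slightly more complete, since you write out the reverse inclusion (first set $\subseteq\ran P$) via $\ker j_\ph=L_\ph$ and the faithfulness of $\Phi$, which the paper's proof leaves implicit.
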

\begin{proof}
 Let $y=Py\in\ran P$, $T\in\SS$ and $\ph\in\Phi$. Then we can write $y=yp_\ph+yp_\ph^{\bot}$. Since $yp_\ph^\bot\in L_\ph$ and $L_\ph$ is invariant under $P$, we have
\[
yp_\ph+yp_\ph^\bot=y=Py=P(yp_\ph)+P(yp_\ph^\bot)=P(yp_\ph)+yp_\ph^\bot,
\]
hence $yp_\ph=P(yp_\ph)$. Thus $P_\ph(yp_\ph)=(P(yp_\ph))p_\ph=yp_\ph p_\ph=yp_\ph$, 
i.e., $yp_\ph\in\ran P_\ph$.
Using Theorem \ref{theo:JDLG} and the equality (\ref{eq:norm})
we obtain
\[
\langle\ph,y^*y\rangle=\langle\ph,(yp_\ph)^*yp_\ph\rangle=\langle\ph,(T(yp_\ph))^*T(yp_\ph)\rangle=\langle\ph,(Ty)^*Ty\rangle.
\]

This proves the first equality. 
The second equality follows from the Cauchy-Schwarz inequality (see e.g.\@ \cite[Corollary V.2.2]{bourbaki-tvs}).

\end{proof}

As stated in Section 1 the semigroup $\SS$ acts on $ \BA _{ r }$ as the compact topological group $\GG:=M(\SS)$ through
\begin{align*}
  T:\GG &\to \L(\A_r)\\
g&\mapsto T_g=g|_{\A_r}.
\end{align*}

Further we denote by $m$ the Haar measure of $\GG$.
 
It is a consequence of the Peter-Weyl Theorem  that the union of all finite dimensional $ \GG $-invariant subspaces of $ \BA _{ r } $ is dense in $ \BA _{ r } $ 
(see  \cite[Theorem 7.2.1]{deitmar}).

If the group $ \GG $ is abelian, then the irreducible subspaces of $ \GG $ in $ \BA _{ r } $ 
are one-dimensional and hence
\[
	\BA _{ r } = \overline{\lin} \{x\in\BA: \exists \;\hat{g}\in \widehat{\GG} 
			\text{ such that } gx=\hat{g}(g)x\en \forall \;  g\in\GG\},
\]
where the closure is taken with respect to the $ \sigma ( \BA , \BA _{ * } ) $-topology and $\widehat{\GG}$ denotes the character group of $\GG$.

We call these subspaces \emph{generalised eigenspaces} pertaining to the \emph{generalised eigenvalue} $ \hat{g} $. 

We say that the action of the group $ \GG $ on $ \BA _{ r } $ is \emph{ergodic} if the fixed space of $\GG$ is spanned by the unit of $ \BA _{ r } $.

\begin{prop}\label{prop:unitary-eigenvector-trace}
If the group $\GG$ is abelian and its action on $\A_r$ is ergodic, then $\A_r$ is generated by a family $\{u_{\hat{g}}\}_{\hat{g}\in\widehat{\mf{G}}}$ of unitary generalised eigenvectors with $\langle\ph,u_{\hat{g_1}}^*u_{\hat{g_2}}\rangle=0$ for all $\hat{g_1}\neq \hat{g_2}\in\widehat{\GG}$. Moreover, $\ph|_{\A_r}$ is a normal trace on $\A_r$ for each $\ph\in\Phi$.
\end{prop}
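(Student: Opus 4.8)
The plan is to combine the structural information from Proposition~\ref{thm:range-and-kernel} with the representation theory of the compact abelian group $\GG$ and the invariance $S^*\ph = \ph$. Since $\GG$ is abelian, the Peter--Weyl decomposition recalled above gives that $\A_r$ is the $\sigma^*$-closed linear span of the generalised eigenspaces $E_{\hat g} = \{x \in \A_r : gx = \hat g(g)x \text{ for all } g \in \GG\}$. Ergodicity means $E_{\hat 1} = \C\1$. First I would show each nonzero generalised eigenspace is one-dimensional and spanned by a \emph{unitary}: pick $0 \neq x \in E_{\hat g}$; then $x^*x$ and $xx^*$ are fixed by $\GG$ (since $g(x^*x)=(gx)^*(gx)=\overline{\hat g(g)}\hat g(g)\,x^*x = x^*x$, using that the $T_g$ are $^*$-automorphisms of $\A_r$ by Corollary~\ref{cor:star_automorphisms}), hence by ergodicity $x^*x = \l\1$ and $xx^* = \mu\1$ for scalars $\l,\mu>0$; rescaling gives $x^*x = xx^* = \1$, i.e.\ $x$ is unitary, say $u_{\hat g}$. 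One-dimensionality follows because if $x,y \in E_{\hat g}$ are both unitary then $u_{\hat g}^*x$ is $\GG$-fixed, hence a scalar.

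Next I would establish the orthogonality relations. For $\hat g_1 \neq \hat g_2$, the element $u_{\hat g_1}^* u_{\hat g_2}$ lies in the eigenspace $E_{\hat g_1^{-1}\hat g_2}$ with $\hat g_1^{-1}\hat g_2 \neq \hat 1$, so it belongs to $\A_r$ and $T(u_{\hat g_1}^*u_{\hat g_2}) = \overline{\hat g_1(g)}\hat g_2(g)\,u_{\hat g_1}^*u_{\hat g_2}$ for $T = T_g$. Applying $\ph$ and using $S^*\ph = \ph$ (hence $T^*\ph = \ph$ for all $T \in \SS$, by passing to the closure as in the proof of Theorem~\ref{thm:spec}), we get $\sk{\ph, u_{\hat g_1}^*u_{\hat g_2}} = \sk{\ph, T(u_{\hat g_1}^*u_{\hat g_2})} = \overline{\hat g_1(g)}\hat g_2(g)\sk{\ph, u_{\hat g_1}^*u_{\hat g_2}}$; choosing $g$ with $\hat g_1(g) \neq \hat g_2(g)$ forces $\sk{\ph, u_{\hat g_1}^*u_{\hat g_2}} = 0$. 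The same computation with $\hat g_1 = \hat g_2$ shows $\sk{\ph, u_{\hat g}^* u_{\hat g}} = \sk{\ph,\1} = 1$, consistent with the trace claim.

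For the trace property of $\ph|_{\A_r}$, I would first check it on products of eigenvectors: $\sk{\ph, u_{\hat g_1} u_{\hat g_2}}$ and $\sk{\ph, u_{\hat g_2} u_{\hat g_1}}$ both vanish unless $\hat g_1\hat g_2 = \hat 1$ (again because $u_{\hat g_1}u_{\hat g_2}$ spans $E_{\hat g_1 \hat g_2}$, using commutativity of $\GG$ for the eigenvalue computation), and when $\hat g_2 = \hat g_1^{-1}$ one has $u_{\hat g_1}u_{\hat g_1^{-1}} = c\1$ and $u_{\hat g_1^{-1}}u_{\hat g_1} = c'\1$ for scalars; comparing with $u_{\hat g_1^{-1}} = $ (scalar multiple of) $u_{\hat g_1}^*$ — which holds because $u_{\hat g_1}^* \in E_{\hat g_1^{-1}}$ is unitary and $E_{\hat g_1^{-1}}$ is one-dimensional — shows $c = c'$, so $\sk{\ph, u_{\hat g_1}u_{\hat g_2}} = \sk{\ph, u_{\hat g_2}u_{\hat g_1}}$ in all cases. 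Extending by sesquilinearity over the dense span $\lin\{u_{\hat g}\}$ and invoking $\sigma^*$-continuity (normality) of $\ph$ and of multiplication in $\A_r$, together with boundedness on the unit ball, yields $\sk{\ph, xy} = \sk{\ph, yx}$ for all $x,y \in \A_r$; positivity and normality of $\ph|_{\A_r}$ are inherited from $\ph$, so it is a normal trace.

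The main obstacle I anticipate is the passage from the eigenvector relations to the statement "on all of $\A_r$": one must be careful that the linear span of the $u_{\hat g}$ is not merely norm-dense but $\sigma^*$-dense in $\A_r$ (which is what the Peter--Weyl remark provides), and that both multiplication $\A_r \times \A_r \to \A_r$ and the functional $\ph$ behave well enough under the relevant topology on \emph{bounded} sets — so the density argument should be run on the unit ball using Lemma~\ref{lemma:Sakai_1.7.8} and Lemma~\ref{lem:top2}, rather than on all of $\A_r$ at once. A secondary subtlety is confirming $\A_r$ is closed under the involution (so that $u_{\hat g}^* \in \A_r$), which follows from Corollary~\ref{cor:star_automorphisms} since $\A_r$ is a W$^*$-subalgebra.
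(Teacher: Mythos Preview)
Your argument is correct and follows the same overall plan as the paper: obtain eigenvectors from the abelian Peter--Weyl decomposition, use ergodicity plus the $^*$-automorphism property of the $T_g$ to show that nonzero eigenvectors normalise to unitaries, establish orthogonality, and from this deduce the trace property on the span of the $u_{\hat g}$. Your extra remarks on one-dimensionality of the eigenspaces and on the density/continuity issues in passing from $\lin\{u_{\hat g}\}$ to all of $\A_r$ are more explicit than what the paper writes (the paper simply cites \cite{olesen1980} and \cite{stoermer} at those points).

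The one genuine methodological difference is in the orthogonality step. You show $u_{\hat g_1}^*u_{\hat g_2}\in E_{\hat g_1^{-1}\hat g_2}$ (using multiplicativity of $T_g$) and then apply the invariance $T^*\ph=\ph$ to get $\sk{\ph,u_{\hat g_1}^*u_{\hat g_2}}=\overline{\hat g_1(g)}\hat g_2(g)\sk{\ph,u_{\hat g_1}^*u_{\hat g_2}}$. The paper instead reaches the same identity directly from Proposition~\ref{thm:range-and-kernel}, namely $\sk{\ph,x^*y}=\sk{\ph,(gx)^*(gy)}$ for $x\in\ran P$, applied to $x=u_{\hat g_1}$, $y=u_{\hat g_2}$. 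The paper's route needs neither multiplicativity nor $S^*\ph=\ph$ at this step---it uses only the isometry characterisation of $\ran P$ available under the hypotheses of Theorem~\ref{thm:main}. Since you (and, tacitly, the paper) already rely on Corollary~\ref{cor:star_automorphisms} to get the $^*$-automorphism property needed for unitarity of the eigenvectors, this is not a gap in your proof; but it is worth noting that the orthogonality relation is in fact a consequence of Proposition~\ref{thm:range-and-kernel} alone.
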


\begin{proof}
For a fixed $ x \in \BA $ we define
\begin{equation}\label{eq:gen-eigenvector}
	x _{ \hat{ g } } := \int _{ \GG }  \hat{g}(g) g x\, dm ( g )  
\end{equation}
 for $ \hat{g} \in \widehat{\GG} $. Then 
\[
	 h x _{ \hat{ g } } = \overline{ \hat{ g }(h) }\, x _{ \hat{ g } }
\]
for all $ h \in \GG $, i.e. $ x _{ \hat{ g } } \in \BA _{ r } $ and 
$ x _{ \hat{ g } } ^{ * } x _{ \hat{ g } } $ is a fixed vector for the group $ \GG $.

If $ x _{ \hat{ g } } \neq 0 $, then 
\[
	u _{ \hat{g}} := \|  x _{ \hat{ g } } \| ^{ - 1 } x _{ \hat{ g } }
\]
is unitary and we can assume $ \BA _{ r } $ to be generated by unitary elements (c.f. \cite[2.3]{olesen1980}).

Let  $ x, y \in \BA , x \neq y $,  
$ \hat{ g }_{ 1 } , \hat{ g }_{ 2 }   \in \widehat{ \GG}, \hat{ g }_{ 1 }   \neq \hat{ g }_{ 2 }  $
and let $ x _{  \hat{ g }_{ 1 } }, y _{ \hat{ g }_{ 2 } } $ be defined as in 
\eqref{eq:gen-eigenvector}. Then for each $\ph\in \Phi $ we obtain by Theorem \ref{thm:range-and-kernel} that
\[
	\langle\varphi , x _{  \hat{ g }_{ 1 } } ^{ * } y _ { \hat{ g }_{ 2 } } \rangle =\langle\varphi,  (g x _{  \hat{ g }_{ 1 } })  ^{ * } (g y _{ \hat{ g }_{ 2 } }) \rangle =
			  \overline{\hat{ g }_{ 1 }} \hat{ g }_{ 2 }(g)  \langle 
			\varphi , x _{  \hat{ g }_{ 1 } } ^{ * } y _ { \hat{ g }_{ 2 } }\rangle.
\]
Since $ \overline{\hat{ g }_{ 1 }} \hat{ g }_{ 2 } \neq \1  $, this implies 
$ \langle \varphi, x _{  \hat{ g }_{ 1 } } ^{ * } y _ { \hat{ g }_{ 2 } }\rangle = 0$ for all $\ph\in\Phi$.

Further, for $ x = y $,  $ \hat{g} _{ 1 } = \hat{g} _{ 2 } =: \hat{g} $ and $ x _{ \hat{g} } \neq 0 $ the unitary $ u _{ \hat{g} } =  \|  x _{ \hat{ g } } \| ^{ - 1 } x _{ \hat{g} } $ satisfies 
$\langle \varphi, u _{ \hat{g} } ^{ * } u _{ \hat{g} } \rangle = \langle\varphi , u _{ \hat{g} } u _{ \hat{g} } ^{ * }\rangle$ for all $\ph\in\Phi$. 
If $ x \in \BA_{r} $ is a linear combination of such unitaries, we have 
$ \langle \varphi, x ^{*} x \rangle = \langle\varphi, x x ^{ * }\rangle $, hence $ \varphi  $ acts as a normal trace on 
$ \BA _{ r } $ (c.f. \cite[Theorem 2.2]{stoermer}).

\end{proof}

In general $\ran P$ is devoid of any algebra structure. However, if $\S$ consists of completely positive operators then $P$ itself will also be completely positive and allows us to introduce a new multiplication turning $\ran P$ into a W$^*$-algebra.

\begin{prop}\label{prop:choi-effros}
If in addition to the hypotheses of Theorem \ref{thm:main} the semigroup $\S$ consists of completely positive operators, then $ \ran P $ is a $W^{*}$-algebra with the new product 
\begin{equation}\label{E:new-product}
	x \cdot y := P ( x y ) \, , \, x , y \in \ran P \, .
\end{equation}
\end{prop}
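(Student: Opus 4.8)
The plan is to invoke the Choi--Effros theorem, which states that if $\mc{E}\colon \mc B\to\mc B$ is a unital completely positive idempotent (conditional-expectation-type projection) on a $C^*$-algebra $\mc B$, then $\ran\mc E$ becomes a $C^*$-algebra under the new product $x\cdot y:=\mc E(xy)$, with the original involution and the original norm. In the $W^*$-setting one needs the additional observation that $\ran P$, being the range of a normal (i.e.\ $\sigma^*$-continuous) projection, is $\sigma^*$-closed, hence a dual Banach space, and that the Choi--Effros product is separately $\sigma^*$-continuous; this upgrades the $C^*$-algebra to a $W^*$-algebra. So the proof reduces to verifying the hypotheses of Choi--Effros for our projection $P$.

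First I would record that $P$ is itself completely positive. Since each $S\in\S$ is completely positive and $P$ lies in the weak$^*$ operator closure of the semigroup generated by $\S$ (indeed $P\in M(\SS)\subset\SS$), and complete positivity is preserved under pointwise $\sigma^*$-limits (for each fixed $n$, positivity of $(S\otimes\mr{id}_{M_n})(a)$ for all $a\ge 0$ passes to the limit because order intervals are $\sigma^*$-closed), we get that $P$ is completely positive. Next, $P$ is $\sigma^*$-continuous: $P$ belongs to $\SS\subset\L(\A,\s^*)$, and in the proof of Theorem~\ref{thm:spec}/\ref{thm:main} it was shown that every element of $\SS$ is a $\s^*$-continuous operator on $\A$. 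Hence $\ran P$ is $\sigma^*$-closed. I would also note that $P$ fixes the unit: since $S^*\ph=\ph$ is \emph{not} assumed here (that is Corollary~\ref{cor:star_automorphisms}), one instead argues that $P\1$ is a positive fixed point of the group $\GG=M(\SS)$, and more carefully that $\1\in\ran P$ — this is the one point requiring attention, see below.

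With $P$ unital, completely positive, idempotent and normal, the Choi--Effros theorem gives that $(\ran P,\cdot,{}^*,\|\cdot\|)$ is a $C^*$-algebra, where the new multiplication is \eqref{E:new-product}; separate $\sigma^*$-continuity of $(x,y)\mapsto P(xy)$ follows from separate $\sigma^*$-continuity of the original multiplication (Lemma~\ref{lemma:Sakai_1.7.8}) composed with the $\sigma^*$-continuous map $P$. Since $\ran P$ is a $\sigma^*$-closed subspace of the dual Banach space $\A$, it is itself a dual Banach space, and a $C^*$-algebra which is a dual space with separately weak$^*$-continuous multiplication is a $W^*$-algebra (Sakai's characterization). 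This yields the claim.

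The main obstacle is the unitality of $P$, i.e.\ showing $\1\in\ran P$, since unlike in Corollary~\ref{cor:star_automorphisms} we do \emph{not} assume $S^*\ph=\ph$. One resolves this by working inside each GNS space: $P_\ph$ is the unit of the compact topological group $M(\SS_\ph)$ of contractions on $H_\ph$, and one checks the cyclic vector $\xi_\ph$ (the image of $\1$) lies in $\ran P_\ph$. Indeed, by inequality~\eqref{eq:ungleichung} the contractions $S_\ph$ satisfy $\|S_\ph\xi_\ph\|\le\|\xi_\ph\|$ on the one hand, while on the other the completely positive unital maps $S$ satisfy $(S\1)^*(S\1)\le S(\1^*\1)=S\1\le\1$ in the Kadison--Schwarz sense — but actually $S\1\le\1$ only; to get norm-preservation one uses that $M(\SS)$ is a \emph{group}, so some $T\in\SS$ with $(PTP)(PSP)=P$ forces $\|S_\ph\xi_\ph\|=\|\xi_\ph\|$ by the same sandwiching argument as in the proof of Theorem~\ref{thm:spec}; then $\xi_\ph\in\ran P_\ph$ by strict convexity. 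Hence $P\1$ agrees with $\1$ on each $K_\ph$, and density of $\lin\{K_\ph\}_{\ph\in\Phi}$ gives $P\1=\1$. (Alternatively, and more cleanly, one replaces $\A$ by the hereditary $W^*$-subalgebra generated by the fixed projection $P\1$ and runs Choi--Effros there; but the argument above shows this reduction is unnecessary.) Once unitality is in hand, the rest is the routine application of Choi--Effros and Sakai's $W^*$-characterization sketched above.
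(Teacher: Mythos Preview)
Your core strategy---establish that $P$ is completely positive (as a weak$^*$-limit of completely positive operators) and $\sigma^*$-continuous (as an element of $\SS\subset\L(\A,\sigma^*)$), invoke the Choi--Effros theorem to get a $C^*$-algebra structure on $\ran P$ under the product \eqref{E:new-product}, and then use normality of $P$ to upgrade to a $W^*$-algebra---is exactly the paper's two-sentence proof. The paper does not discuss unitality or contractivity of $P$ at all; it simply cites \cite{choi1977} for the $C^*$ part and $\sigma^*$-continuity for the $W^*$ part.

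The place where you diverge, and where there is a genuine error, is the unitality discussion. Your sandwiching argument is circular: from $(PTP)(PSP)=P$ you obtain, upon applying $\theta_\ph$ and evaluating at $\xi_\ph$, only $P_\ph T_\ph P_\ph S_\ph P_\ph\xi_\ph=P_\ph\xi_\ph$, which gives $\|S_\ph(P_\ph\xi_\ph)\|=\|P_\ph\xi_\ph\|$, not $\|S_\ph\xi_\ph\|=\|\xi_\ph\|$. The argument in the proof of Theorem~\ref{thm:spec} characterises $\ran P_\ph$ but does not place a given vector into it. Moreover, $P\1=\1$ is simply false under the hypotheses of the proposition: take $\A=\C^2$, $\ph$ the uniform state, and $\S=\{S\}$ with $S(a,b)=(a,0)$; then $S$ is completely positive, idempotent, and satisfies \eqref{eq:ungleichung}, yet $P=S$ and $P\1=(1,0)\neq\1$. (Your phrase ``$P\1$ agrees with $\1$ on each $K_\ph$'' does not type-check either: these are elements of $\A$, not operators.) The paper avoids the issue by citing Choi--Effros in the form that needs only 2-positivity of the idempotent, with $P\1$ rather than $\1$ serving as the unit of the new algebra; your detour through unitality is both unnecessary for the argument and, as written, incorrect.
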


\begin{proof}
Choi and Effros have shown in \cite{choi1977} that 2-positivity of $P$ implies that the multiplication defined in (\ref{E:new-product}) turns $\ran P$ into a C$^*$-algebra. Since $P$ is also $\sigma^*$-continuous, $\ran P$ is actually a W$^*$-algebra.
\end{proof}

In the case that $ P $ is \emph{faithful}, i.e., if $P ( x ) = 0$ for $x \in \BA _{ + }$ implies $x = 0$,
we obtain that $ \BA _{ r } $  is actually a W$^{*}$-\emph{sub}algebra of $ \BA $. 
To prove this, we need the following lemmas.

\begin{lemma}\label{lem:cp-mult-domain}
Let $ S\in \L(\A) $ be an operator satisfying $ S ( x ) ^{ * } S ( x ) \leq S ( x ^{ * } x ) $ for all
$ x \in \BA $.
Then
\[
  	\{ x \in \BA \colon S ( x ) ^{ * } S ( x ) = S ( x ^{ * } x ) \}
	= \{ x \in \BA \colon S ( x ) ^{ * } S ( y ) = S ( x ^{ * } y ) \; \text{ for all $ y \in \BA $} \}.
\]
\end{lemma}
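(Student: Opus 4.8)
The plan is to prove the Lemma via a polarization-type argument combined with the generalized Cauchy--Schwarz inequality for the (not necessarily multiplicative but 2-positive-like) map $S$. Observe first that the inclusion $\supseteq$ is immediate: taking $y = x$ in the right-hand set gives $S(x)^*S(x) = S(x^*x)$. So the real content is the inclusion $\subseteq$, i.e.\ that equality in the Kadison--Schwarz inequality at a single point $x$ forces equality at the ``bilinear'' level $S(x)^*S(y) = S(x^*y)$ for \emph{all} $y \in \BA$. This is the familiar phenomenon that the set of points where a Schwarz-type inequality is saturated is a ``multiplicative domain'' on which the map behaves like a homomorphism against everything.

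First I would fix $x$ in the left-hand set and define, for $y \in \BA$, the ``defect'' $D(y) := S(x^*y) - S(x)^*S(y)$; the goal is to show $D(y) = 0$ for all $y$. The key tool is the generalized Cauchy--Schwarz inequality associated to the positive form $(a,b) \mapsto S(a^*b) - S(a)^*S(b)$, or more concretely: for any $y$ and any scalar $\lambda \in \C$ (and then any $\lambda$ replaced by an element to get the full statement, but scalars suffice if we then vary $y$), consider the element $z = \lambda x + y$ (here I should be careful that $\BA$ is a $*$-algebra, so I would actually use $z = x a + y$ with $a \in \BA$, or simply work with $z = x + \mu y$ for $\mu \in \C$). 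Applying the hypothesis $S(z)^*S(z) \le S(z^*z)$ and expanding, the terms in $x^*x$ cancel exactly because $x$ lies in the equality set; what remains is an inequality of the form $\overline{\mu} D(y) + \mu D(y)^* + |\mu|^2\bigl(S(y^*y) - S(y)^*S(y)\bigr) \ge 0$ for all $\mu \in \C$. Letting $\mu \to 0$ along suitable phases forces the first-order term $\overline{\mu}D(y) + \mu D(y)^*$ to be dominated by an $O(|\mu|^2)$ quantity, which is only possible if $D(y) + D(y)^* = 0$ and, choosing $\mu$ imaginary, $i(D(y) - D(y)^*) = 0$, hence $D(y) = 0$. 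Running through the standard state-evaluation form of this argument (apply a state, use that a quadratic $t \mapsto \alpha t^2 + 2\beta t + \gamma$ with $\alpha,\gamma \ge 0$ that is $\ge 0$ on all of $\R$ must have $|\beta| \le \sqrt{\alpha\gamma}$, then recenter at $t=0$) makes this rigorous.

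The main obstacle I anticipate is bookkeeping around the fact that $S$ is only assumed to satisfy $S(x)^*S(x) \le S(x^*x)$ — a Kadison--Schwarz (2-positivity-type) inequality — rather than full complete positivity or multiplicativity, so I cannot invoke the Stinespring picture directly and must run the elementary quadratic-form argument by hand. In particular I must make sure the cross terms $S(x^*y) $ and $S(y^*x)$ that appear after expanding $S((x+\mu y)^*(x+\mu y))$ line up correctly with $S(x)^*S(y)$ and $S(y)^*S(x)$ after using $S(z)^*S(z) = S(z^*z)$-type cancellation, i.e.\ I need to verify there is no obstruction to the cancellation of the $x^*x$ term, which is exactly where the hypothesis $x \in \{S(x)^*S(x) = S(x^*x)\}$ is used and nowhere else. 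A secondary, minor point: to conclude $D(y) = 0$ as an element of $\BA$ (not merely that $\langle\psi, D(y)\rangle = 0$ for the states used), I separate real and imaginary parts and note that an element $b$ with $\langle \psi, \alpha b + \overline{\alpha}b^*\rangle$ controlled by $O(|\alpha|^2)$ for all $\psi$ in a separating family and all $\alpha$ must vanish; this is routine. Once $D \equiv 0$, the Lemma follows.
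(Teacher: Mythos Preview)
Your proposal is correct and is essentially the same argument as the paper's, just carried out by hand. The paper defines, for each state $\varphi$ on $\BA$, the sesquilinear form $f_\varphi(x,y):=\varphi\bigl(S(y^*x)-S(y)^*S(x)\bigr)$, observes that it is positive (hence hermitian) by the Schwarz hypothesis, and then simply invokes the standard Cauchy--Schwarz equality case (citing Bourbaki) to conclude that $f_\varphi(x,x)=0$ forces $f_\varphi(x,y)=0$ for all $y$; separating by states finishes. Your quadratic expansion of $S((x+\mu y)^*(x+\mu y))-S(x+\mu y)^*S(x+\mu y)\ge 0$ and the subsequent phase argument in $\mu$ is exactly the textbook proof of that Cauchy--Schwarz fact specialized to this form, so the two routes coincide in substance; the paper merely outsources the step you write out explicitly.
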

\begin{proof}
Let $ \ph $ be a state on $ \BA $ and define for $ x , y \in \BA $ 
\[
	f _{ \ph }( x , y ) := \ph ( P ( y ^{ * } x ) - P ( y ^{ * } ) P ( x ) ).
\]
Then $ f _{ \ph } $ is a positive hermitian form on $ \BA $ for which
$ f _{ \ph } ( x , x ) = 0 $ if and only if $ f _{ \ph } ( x , y ) = 0 $ for all $ y \in \BA $
(see e.g. \cite[V.2., Prop. 2 and Cor. 1]{bourbaki-tvs}).
Since the states separate the points of $ \BA $, the assertion follows.
\end{proof}

\begin{prop}\label{prop:choi-effros-faithful}
If, in addition to the hypotheses of Theorem \ref{thm:main}, the semigroup $\S$ consists of completely positive operators and if $ P $ is faithful on $ \BA $, then $P$ is a conditional expectation, i.e., $ P ( y x z ) = y P ( x ) z $ for all $ y, z \in \BA_r $ and all $ x \in \BA $ and 
$ \BA _{ r } $ is a $W^{*}$-subalgebra of $ \BA $. 
\end{prop}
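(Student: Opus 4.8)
The plan is to show that $P$ is a completely positive map obeying the Kadison--Schwarz inequality, to use the faithfulness of $P$ to prove that $\BA_r=\ran P$ is contained in the multiplicative domain of $P$, and then to deduce the module identity $P(yxz)=yP(x)z$ from Lemma~\ref{lem:cp-mult-domain}. First I would check that $P$ is completely positive: since $P\in\SS$ is a limit in $\L_s(\A,\s^*)$ of operators from $\S$, for every $n\in\N$ the amplification $[x_{ij}]\mapsto[P(x_{ij})]$ on the W$^{*}$-algebra $M_n(\A)$ is a weak$^{*}$-limit of the completely positive maps $[x_{ij}]\mapsto[S(x_{ij})]$, $S\in\S$, and positivity passes to the limit because the positive cone of $M_n(\A)$ is weak$^{*}$-closed. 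In particular $P$ is a Schwarz map, $P(x)^{*}P(x)\le P(x^{*}x)$ for all $x\in\A$; applied with $x=\1$ this gives $P(\1)^{2}\le P(\1)$, hence $P(\1)\le\1$, and since $P(\1)\in\ran P$ and $P^{2}=P$ we get $P(\1-P(\1))=P(\1)-P(\1)=0$, so faithfulness forces $P(\1)=\1$. Thus Lemma~\ref{lem:cp-mult-domain} is available for $S=P$, and its multiplicative domain $D:=\{x\in\A:P(x)^{*}P(x)=P(x^{*}x)\}$ also equals $\{x\in\A:P(x^{*}y)=P(x)^{*}P(y)\ \text{for all}\ y\in\A\}$.

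The key step is to prove $\BA_r\subseteq D$. Given $x\in\BA_r$, so $Px=x$, set $w:=P(x^{*}x)-x^{*}x$. By the Schwarz inequality $w=P(x^{*}x)-P(x)^{*}P(x)\ge0$, while $P(w)=P(x^{*}x)-P(x^{*}x)=0$ since $P^{2}=P$; faithfulness of $P$ then yields $w=0$, i.e.\ $P(x^{*}x)=x^{*}x\in\BA_r$, so $x\in D$. Because $P$ is positive it is $*$-preserving, hence $\BA_r$ is $*$-closed, and the same argument applied to $x^{*}$ gives $P(xx^{*})=xx^{*}\in\BA_r$ as well. So $\BA_r$ is a $*$-closed linear subspace of $\A$ that is stable under $x\mapsto x^{*}x$, and therefore --- by the polarisation identity $4y^{*}x=\sum_{k=0}^{3}i^{k}(x+i^{k}y)^{*}(x+i^{k}y)$ --- stable under the multiplication of $\A$; being in addition norm-closed and $\s^{*}$-closed (since $P$ is $\s^{*}$-continuous and $\ran P=\ker(\mathrm{id}-P)$), $\BA_r$ is a W$^{*}$-subalgebra of $\A$, on which the Choi--Effros product \eqref{E:new-product} agrees with the original one.

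Finally, for $y,z\in\BA_r$ and $x\in\A$ I would use that $y^{*},z\in\BA_r\subseteq D$: the second description of $D$ gives $P(y\eta)=P((y^{*})^{*}\eta)=P(y^{*})^{*}P(\eta)=yP(\eta)$ for all $\eta\in\A$ (using $P(y^{*})=y^{*}$), and, taking adjoints, $P(\eta z)=P(\eta)z$; combining these with $\eta=xz$ and $\eta=x$ respectively gives $P(yxz)=yP(xz)=yP(x)z$, which is the conditional-expectation property, and in particular $\BA_r$ is a W$^{*}$-subalgebra. I expect the only non-routine point to be $\BA_r\subseteq D$: a completely positive projection need not have a multiplicatively closed range (a priori its range carries only the Choi--Effros product), and it is exactly the faithfulness of $P$ --- through ``$w\ge0$, $P(w)=0\Rightarrow w=0$'' applied to $w=P(x^{*}x)-x^{*}x$ --- that collapses the two products; all the rest is the standard multiplicative-domain calculus provided by Lemma~\ref{lem:cp-mult-domain}.
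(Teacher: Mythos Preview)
Your proof is correct and follows the same strategy as the paper: use the Schwarz inequality for the completely positive projection $P$ together with faithfulness to show $P(x^{*}x)=x^{*}x$ for $x\in\BA_r$, and then invoke Lemma~\ref{lem:cp-mult-domain} to obtain the module identities $P(xy)=xP(y)$, $P(yx)=P(y)x$. The paper is slightly more direct in one place: once Lemma~\ref{lem:cp-mult-domain} gives $P(xy)=xP(y)$ for $x\in\BA_r$ and all $y\in\BA$, multiplicative closure of $\BA_r$ follows immediately from $P(xy)=xP(y)=xy$ when $y\in\BA_r$, so your detour through the polarisation identity is unnecessary (though not wrong). Your extra verifications that $P$ is completely positive and that $P(\1)=\1$ are points the paper leaves implicit.
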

\begin{proof}
For every $ x \in \BA _{ r } $ we have $ P ( x ) = x $, hence $ P ( x ^{ * } ) = x ^{ * } $ by the positivity of $ P $ and 
\[
	x ^{ * } x = P ( x ^{ * } ) P ( x ) \leq P ( x ^{ * } x )
\]
by the Schwarz-inequality for completely positive maps.
Hence
\[
	0 \leq P ( P ( x ^{ * } x ) - x ^{ * } x )  = 0 
	\quad \text{and thus} \quad
	P ( x ^{ * } x ) = x ^{ * } x
\]
by the faithfulness of $ P $.
But then by Lemma \ref{lem:cp-mult-domain}
\begin{equation}\label{eq:claim}
	P ( x y ) = x P ( y ) 
	\quad \text{and} \quad
	P ( y x  ) = P ( y ) x
\end{equation}
for all $ x \in \BA _{ r } $ and $ y \in \BA $.
Hence $ P(xy)=xy$ for all $x,y\in\A_r$ and $\A_r$ is a W$^*$-subalgebra of $\A$.
\end{proof}

\begin{remark}
\label{rem:faithful}
  We note that if $S^*\ph=\ph$ for all $S\in\S$ and $\ph\in\Phi$, then the same holds for all elements in the closure of $\S$, thus in particular for $P$. If $\Phi$ is faithful, this implies that $P$ is faithful.
\end{remark}

\begin{lemma}\label{lem:schwarz-invertrierbar-automorphism}
Let $ S\in \L(\A) $ be an invertible operator satisfying $ S ( x ) ^{ * } S ( x ) \leq S ( x ^{ * } x ) $ 
and $ S ^{ - 1 }( x ) ^{ * } S ^{ -1 }( x ) \leq S ^{ - 1 }( x ^{ * } x ) $ for all
$ x \in \BA $.
Then $ S $ is a $^{*}$-automorphism on $ \BA $.
\end{lemma}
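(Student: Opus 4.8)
The plan is to use the Schwarz inequality $S(x)^*S(x)\le S(x^*x)$ (and the analogous one for $S^{-1}$) together with Lemma~\ref{lem:cp-mult-domain} to show that the multiplicative domain of $S$ is all of $\BA$, which forces $S$ to be a $^*$-homomorphism; invertibility then upgrades this to a $^*$-automorphism. First I would apply Lemma~\ref{lem:cp-mult-domain} to $S$: it gives
\[
  \{x\in\BA\colon S(x)^*S(x)=S(x^*x)\}=\{x\in\BA\colon S(x)^*S(y)=S(x^*y)\text{ for all }y\in\BA\},
\]
so it suffices to prove that every $x\in\BA$ satisfies the equality $S(x)^*S(x)=S(x^*x)$. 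The key computational step is the following chain: for arbitrary $x\in\BA$, apply the Schwarz inequality for $S^{-1}$ at the element $S(x)$, giving $x^*x=S^{-1}(S(x))^*S^{-1}(S(x))\le S^{-1}\big(S(x)^*S(x)\big)$. Applying $S$ — which is positive, being Schwarz-positive (take $y=\1$ in the defining inequality, or note Schwarz positivity implies $S(\1)\ge 0$ and positivity of $S$ on positive squares) — to the inequality $0\le S^{-1}(S(x)^*S(x))-x^*x$ yields $0\le S(x)^*S(x)-S(x^*x)$... which has the wrong sign, so instead I would run this more carefully: from $x^*x\le S^{-1}(S(x)^*S(x))$ and positivity of $S$ we get $S(x^*x)\le S(x)^*S(x)$, and combined with the hypothesis $S(x)^*S(x)\le S(x^*x)$ we conclude $S(x)^*S(x)=S(x^*x)$ for all $x\in\BA$.

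With the equality established for all $x$, Lemma~\ref{lem:cp-mult-domain} gives $S(x^*y)=S(x)^*S(y)$ for all $x,y\in\BA$. Taking adjoints and using $S(x^*)^*S(x^*)=S(xx^*)$ (same argument with $x$ replaced by $x^*$) shows $S$ commutes with the involution: indeed, setting $y=\1$ gives $S(x^*)=S(x)^*S(\1)$, and a symmetric computation with the identity $S(\1)=S(\1)^*S(\1)$ forces $S(\1)$ to be a projection, in fact $S(\1)=\1$ because $S$ is invertible and $S^{-1}$ also satisfies Schwarz (so $S^{-1}(\1)$ is likewise a projection and $S(S^{-1}(\1))=\1$, $S^{-1}(S(\1))=\1$, pinning $S(\1)=\1$). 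Hence $S(x^*)=S(x)^*$ and $S(xy)=S(x)S(y)$ for all $x,y$, so $S$ is a unital $^*$-homomorphism. Since $S$ is invertible with $S^{-1}$ bounded, $S$ is a $^*$-automorphism of $\BA$.

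The main obstacle is getting the positivity/unitality bookkeeping exactly right: one must be careful that Schwarz positivity of $S$ (i.e.\@ $S(x)^*S(x)\le S(x^*x)$) does indeed entail that $S$ maps positive elements to positive elements (apply the inequality to $x=x^*\ge 0$... more directly, write $a\ge 0$ as $b^*b$ and note $S(a)=S(b^*b)\ge S(b)^*S(b)\ge 0$), and that the argument pinning down $S(\1)=\1$ genuinely uses invertibility rather than some hidden normalization. Everything else is a routine unwinding of the multiplicative-domain identity once the equality $S(x)^*S(x)=S(x^*x)$ holds globally.
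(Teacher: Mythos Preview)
Your argument is correct and follows exactly the paper's route: apply the Schwarz inequality for $S^{-1}$ at $S(x)$ to get $x^*x\le S^{-1}(S(x)^*S(x))$, use positivity of $S$ to deduce $S(x^*x)\le S(x)^*S(x)$, combine with the hypothesis to obtain equality, and then invoke Lemma~\ref{lem:cp-mult-domain}. The paper simply stops at $S(x^*y)=S(x)^*S(y)$ for all $x,y$ and leaves the remaining bookkeeping implicit; your extra discussion of $S(\1)=\1$ and $^*$-preservation is more detailed than necessary (setting $x=\1$ and using surjectivity of $S$ gives $S(\1)=\1$ in one line), but it is not wrong.
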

\begin{proof}
Notice that $S$ is positive by the first inequality. By the second inequality we have
\[
     S ^{ - 1 } \left( S ( x ) ^{ * } S ( x ) \right) \geq ( S ^{ - 1 } ( S x ^{ * } ) ) ( S ^{ - 1 } ( S  x ) )
     		= x ^{ * } x
\]
for all $x\in\BA$, and the positivity of $ S $ implies $S ( x ) ^{ * } S ( x ) \geq S ( x ^{ * } x )$.
Thus 
\[
     S ( x ^{ * } x ) = S ( x ^{ * } ) S ( x ) .
\]
Applying Lemma \ref{lem:cp-mult-domain} we then obtain $ S ( x ) ^{ * }S ( y ) = S ( x ^{ * } y ) $ for all 
$ x , y \in \BA $.
\end{proof}

\begin{proof}[Proof of Corollary \ref{cor:star_automorphisms}]

The operators $T_g$ are invertible on $\A_r$. Therefore, if the semigroup $\S$ consists of completely positive operators, then it follows from the Schwarz inequality and Lemma \ref{lem:schwarz-invertrierbar-automorphism} that every $ T _{ g } $ is a $^{*}$-automorphism.

Since the minimal projection $P$ is completely positive and faithful by Remark \ref{rem:faithful}, $\A_r$ is a W$^*$-subalgebra by Proposition \ref{prop:choi-effros-faithful}.
\end{proof}

If the semigroup is abelian, then the results in \cite{Albeverio} and \cite{olesen1980} yield a complete description of the reversible part and the action of $\S$ thereon.
\begin{thm} 
\label{prop:algebra-darstellung}
If the semigroup $\S$ is abelian and consists of completely positive operators and the action of $\S$ on $\A_r$ is ergodic, then there is a closed subgroup 
$ \mathbf{ H} $ of $ \GG $ and an ergodic action $ S $ of $ \mathbf{ H} $ on a factor $ \mathfrak{R} $ 
as $^{*}$-automorphisms such that 
\begin{equation}\label{eq:rotation-algebra}
	\BA _{r} \cong \{ f \in L ^{ \infty } ( \GG , \mathfrak{R} ) \colon
		f ( g h ) = S _{ h ^{-1} }( f ( g ) ) \; \text{for all $ g \in \GG, h \in \mathbf{H} $} \}
		\cong  L ^{ \infty } ( \GG/\mf{H})\otimes  \mathfrak{R}
\end{equation}
and $g$ corresponds to the map $ f \mapsto \lambda ( g ) f $ where 
$ ( \lambda ( g ) f )(s) = f ( g ^{ -1 } s ) $ for $s\in\GG$.

The factor $ \mathfrak{R} $ is actually the unique hyperfinite factor of type $ \mathrm{II} _{ 1 } $ or a matrix algebra $M_{n\times n}(\C)$.

\end{thm}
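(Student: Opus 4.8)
The plan is to reduce the statement to the classical structure theory for ergodic actions of compact abelian groups on von Neumann algebras, as developed in \cite{Albeverio} and \cite{olesen1980}. By Corollary \ref{cor:star_automorphisms}, the compact abelian group $\GG=M(\SS)$ acts on the $W^*$-subalgebra $\BA_r$ as a group of $^*$-automorphisms, and by hypothesis this action is ergodic. So the situation is precisely that of an ergodic $W^*$-dynamical system $(\BA_r,\GG)$ with $\GG$ compact abelian, and we may invoke the cited classification directly.

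The key steps are as follows. First, by Proposition \ref{prop:unitary-eigenvector-trace}, the reversible part $\BA_r$ is spanned by unitary generalised eigenvectors $u_{\hat g}$ indexed by $\hat g\in\widehat{\GG}$, and each $\ph|_{\BA_r}$ is a normal trace; in particular $\BA_r$ is a finite von Neumann algebra. Let $\Gamma\subseteq\widehat{\GG}$ be the set of $\hat g$ for which a nonzero eigenvector exists; using the cocycle/$2$-cocycle relation $u_{\hat g_1}u_{\hat g_2}\in\C\,u_{\hat g_1\hat g_2}$ (which follows from uniqueness up to scalars of the eigenvector in each eigenspace together with the automorphism property of the $T_g$), one checks that $\Gamma$ is a subgroup of $\widehat{\GG}$. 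Set $\mathbf H:=\Gamma^\perp$, the annihilator of $\Gamma$ in $\GG$; this is a closed subgroup, and $\GG/\mathbf H$ has character group $\Gamma$. The "classical" part of $\BA_r$, namely the $\sigma^*$-closed span of those $u_{\hat g}$ whose eigenspace is one-dimensional over the relative commutant, is isomorphic to $L^\infty(\GG/\mathbf H)$. Second, the multiplicity of the remaining eigenspaces is governed by a factor $\mathfrak R$: following \cite{olesen1980} and \cite{Albeverio}, the full multiplicity structure assembles into an ergodic action $S$ of $\mathbf H$ on $\mathfrak R$ by $^*$-automorphisms, and one obtains the induced-representation picture $\BA_r\cong\{f\in L^\infty(\GG,\mathfrak R):f(gh)=S_{h^{-1}}(f(g))\}$, with $g\in\GG$ acting by left translation $\lambda(g)$. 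The second isomorphism in \eqref{eq:rotation-algebra}, $\BA_r\cong L^\infty(\GG/\mathbf H)\otimes\mathfrak R$, is then a standard untwisting of the induced algebra. Finally, since $\ph|_{\BA_r}$ is a trace and the action is ergodic, $\mathfrak R$ carries a unique $\mathbf H$-invariant tracial state, which forces $\mathfrak R$ to be either a finite-dimensional matrix algebra $M_{n\times n}(\C)$ or the hyperfinite $\mathrm{II}_1$ factor (the only finite injective-by-ergodicity factors with a unique invariant trace arising this way); this is exactly the classification in \cite{Albeverio}.

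The main obstacle is not any single computation but rather verifying that the hypotheses of the cited theorems are genuinely met in our setting: one must confirm that $\BA_r$ is a von Neumann algebra with separable predual (or adapt the argument if it is not), that the $\GG$-action obtained from the minimal ideal is continuous in the appropriate sense, and that ergodicity in our sense (fixed space $=\C\1$) coincides with the ergodicity hypothesis used in \cite{olesen1980} and \cite{Albeverio}. Once these identifications are in place, \eqref{eq:rotation-algebra} and the statement about $\mathfrak R$ are quotations of \cite[2.3 and the main theorem]{olesen1980} and the corresponding results of \cite{Albeverio}, and the proof is complete.
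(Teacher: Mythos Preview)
Your approach is essentially the paper's: the paper gives no proof at all beyond the sentence preceding the theorem (``the results in \cite{Albeverio} and \cite{olesen1980} yield a complete description of the reversible part and the action of $\S$ thereon''), so the statement is simply a quotation of the classification of ergodic compact abelian group actions from those references, applied to the action of $\GG=M(\SS)$ on $\BA_r$. Your plan makes this reduction explicit and fleshes out the verification of hypotheses, which is more than the paper does.

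One caution: you invoke Corollary~\ref{cor:star_automorphisms} to conclude that $\BA_r$ is a W$^*$-subalgebra and that $\GG$ acts by $^*$-automorphisms, but that corollary carries the extra hypothesis $S^*\ph=\ph$ for all $S\in\S$, $\ph\in\Phi$, which is \emph{not} assumed in the present theorem. The automorphism part is unaffected---it follows from the Schwarz inequality for completely positive maps together with Lemma~\ref{lem:schwarz-invertrierbar-automorphism}, using only that each $T_g$ is invertible on $\BA_r$---but for the W$^*$-algebra structure on $\BA_r$ you should fall back on Proposition~\ref{prop:choi-effros} (the Choi--Effros product) rather than on the subalgebra statement, and then check that the $T_g$ are $^*$-automorphisms for that product. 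This is a small bookkeeping point; the substance of the argument (reduce to \cite{Albeverio,olesen1980}) is exactly what the paper intends.
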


Note that if $\A$ is commutative, then $\mk{R}$ is trivial and we obtain the classical Halmos-von Neumann theorem
for which we refer to \cite[Theorem III.10.4 (Examples 3)]{Schaefer74} and the references cited there.

We now describe the stable part $\ker P$.

\begin{prop} The kernel of $P$ can be characterised as 
 $$\ker P=\{x\in\A \colon  0\in \SS x\}=\bigcup_{T\in\SS}\ker (T).$$
\end{prop}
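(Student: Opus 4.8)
The plan is to prove the three characterisations of $\ker P$ in turn, chaining the equalities
\[
\ker P \;=\; \{x\in\A : 0\in\SS x\} \;=\; \bigcup_{T\in\SS}\ker(T),
\]
and using the analogous Hilbert space description from Theorem \ref{theo:JDLG} transported through the representations $\theta_\ph$.

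First I would show $\bigcup_{T\in\SS}\ker(T)\subseteq\ker P$. If $Tx=0$ for some $T\in\SS$, then since $M(\SS)$ is the minimal ideal and $P\in M(\SS)$, the element $PT$ lies in $M(\SS)=P\SS P$; as $M(\SS)$ is a group with unit $P$ there is $R\in\SS$ with $P = (PRP)(PTP)$ acting on $\ran P$, hence $Px = PRPTPx$. But $PTPx$ involves $TPx$, so this direction is cleanest the other way: instead observe that for any $T\in\SS$ and any $x$ we have $PT = TP$ only on $\ran P$, so I would rather argue via the minimality of the ideal directly. The genuinely robust route is: $x\in\ker P$ iff $Px=0$ iff (applying the faithful family $\theta_\ph$) $\bar P_\ph(xp_\ph)=0$ for all $\ph\in\Phi$, i.e.\ $xp_\ph\in\ker\bar P_\ph = (H_\ph)_s$ for all $\ph$. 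By Theorem \ref{theo:JDLG} applied to the contraction semigroup $\SS_\ph$ on $H_\ph$, this is equivalent to $0$ being a weak accumulation point of $\SS_\ph(xp_\ph)$, equivalently $0\in\overline{\{\bar S_\ph(xp_\ph):S\in\SS\}}^{\,w}$, equivalently $\bar S_\ph(xp_\ph)=0$ for some $S_\ph$ in the (weakly compact) closure $\SS_\ph$ — and since $\SS_\ph=\theta_\ph(\SS)$, this last is $S_\ph$-membership realised by some $S\in\SS$.

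The key step is then to glue these $\ph$-wise statements into a single global one. I would argue: $0\in\overline{\SS x}^{\sigma^*}$ iff there is a net $T_\alpha\in\SS$ with $T_\alpha x\xrightarrow{\sigma^*}0$; restricting to the $s$-topology on the unit ball (after rescaling, using boundedness of $\SS$) and invoking Lemma \ref{lem:top2}, this is equivalent to $T_\alpha x\to 0$ in each $\|\cdot\|_\ph$, i.e.\ $\theta_\ph(T_\alpha)(xp_\ph)\to 0$ weakly — which gives $0\in (H_\ph)_s$ for every $\ph$, hence $xp_\ph\in\ker\bar P_\ph$ for every $\ph$, hence $Px=0$. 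Conversely, $Px=0$ gives $xp_\ph\in\ker\bar P_\ph=(H_\ph)_s=\bigcup_{S}\ker\bar S_\ph$ for each $\ph$; picking for each $\ph$ an $S^{(\ph)}\in\SS$ with $S^{(\ph)}_\ph(xp_\ph)=0$ only kills the $\ph$-component, so to get a single $T$ with $Tx=0$ I would instead use that $P$ itself belongs to $\SS$ and that $Px=0$, establishing $x\in\bigcup_{T\in\SS}\ker T$ immediately with $T=P$. Combined with the trivial inclusion $\bigcup_{T\in\SS}\ker T\subseteq\{x:0\in\overline{\SS x}^{\sigma^*}\}$ (take the constant net), and the first equivalence above, all three sets coincide.

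The main obstacle I anticipate is the passage between the $\sigma^*$-topology on $\A$ and the weak topologies on the Hilbert spaces $H_\ph$: a single orbit $\SS x$ need not sit in the unit ball, so Lemma \ref{lem:top2} is not directly applicable and one must first normalise using boundedness of $\SS$, and one must check that $\sigma^*$-convergence of $T_\alpha x$ to $0$ really is detected by all the seminorms $\|\cdot\|_\ph$ simultaneously (this uses faithfulness of $\Phi$ and $\sup_\ph p_\ph=\1$, exactly as in the proof of Lemma \ref{lem:top2}). The rest is a matter of transcribing the Hilbert space statement of Theorem \ref{theo:JDLG} — in particular the identity $(H_\ph)_s=\bigcup_{S_\ph\in\SS_\ph}\ker S_\ph$ — back through the faithful family $\{\theta_\ph\}_{\ph\in\Phi}$, which is routine once the topological dictionary is in place.
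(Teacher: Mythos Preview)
Your proposal overcomplicates matters and contains a genuine gap, while the paper's proof is a two-line group argument --- in fact the very argument you began and then abandoned.

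The paper's proof: since $P\in\SS$, the inclusion $\ker P\subseteq\bigcup_{T\in\SS}\ker T$ is immediate (you observe this too, eventually). For the reverse, if $Tx=0$ for some $T\in\SS$, then $PT\in M(\SS)$ because $M(\SS)$ is an \emph{ideal} of $\SS$ (not merely equal to $P\SS P$); hence $PT$ has a group inverse $R\in M(\SS)\subseteq\SS$ with $R(PT)=P$, and therefore $Px=RPTx=0$. Your confusion was to replace $PT$ by $PTP$, which indeed introduces an unwanted $TPx$; but there is no need for the extra $P$ on the right --- the ideal property already puts $PT$ inside the group. The middle set $\{x:0\in\SS x\}$ equals $\bigcup_{T\in\SS}\ker T$ tautologically (there is no closure in the statement; and if you want one, $\SS x$ is $\sigma^*$-compact as the continuous image of the compact set $\SS$, so it is already closed).

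The gap in your Hilbert-space detour is the step ``$T_\alpha x\xrightarrow{\sigma^*}0$ is equivalent to $T_\alpha x\to 0$ in each $\|\cdot\|_\ph$, by Lemma~\ref{lem:top2}''. That lemma identifies the $s$-topology with $\mathcal T_\Phi$ on bounded sets; it says nothing about $\sigma^*$ versus $s$, and on bounded sets $\sigma^*$ is strictly weaker than $s$ in general. So you cannot pass from $\sigma^*$-convergence of an orbit to seminorm convergence this way. (The inclusion you actually need, $\bigcup_T\ker T\subseteq\ker P$, \emph{can} be pushed through the representations $\theta_\ph$: if $Tx=0$ then $\overline T_\ph(xp_\ph)=0$, hence $xp_\ph\in\ker\overline P_\ph$ by Theorem~\ref{theo:JDLG}, hence $\|Px\|_\ph=0$ for all $\ph$, hence $Px=0$ by faithfulness. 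But this is still far longer than the one-line group computation above.)
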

\begin{proof}
 Clearly $\ker P\subset\bigcup_{T\in\SS}\ker T$. Now suppose $x\in\ker T$ for some $T\in\SS$. Since $PT\in M(\SS)$, we can find an $R\in\SS$ such that $R(PT)=P$ and thus
$Px=RPTx=0$.
\end{proof}

\section{Applications}\label{sect:ex}

\subsection{Perron-Frobenius for W$^*$-algebras}

We call a triple $ ( \BA , T , \varphi ) $ a \emph{W$^{*}$-dynamical system} if $ T $ is a completely positive operator on $ \BA $ with $ T \1 = \1 $ and $ T ^{ * } \varphi = \varphi $ for some faithful normal state 
$ \varphi $ on $ \BA $. We say that the W$^{*}$-dynamical system is \emph{ergodic} if the fixed space of $T$ is spanned by the unit of $\A$.

If $ ( \BA , T , \varphi ) $ is a W$^{*}$-dynamical system on the commutative W$^{*}$-algebra $ \BA = \C ^{ n } $, then the \emph{peripheral point spectrum} $ \Sp ( T ) \cap\mathbb{T} $ of $ T $ has an important symmetry yielding a certain asymptotic periodicity of the powers $ T ^{ n } $. 

Indeed, by the classical Perron-Frobenius theory there exists a natural number $ h $ such that 

\[
	\Sp ( T ^{ h } ) \cap \mathbb{T} = \{ 1 \}
\]

(see e.g. \cite[Theorem I.2.7]{Schaefer74}).

Then the powers of $ T ^{ h } $ converge to a projection $ P $ onto 

\[
	\Fix ( T ^{ h } ) = \ol{\lin}\{ x \in \BA \colon T x = \lambda x , \lambda \in \mathbb{ T }\} = \BA _{ r }
\]

with $ \ker P = \BA _{ s } $.

If $ S := T \circ P $, then 

\[
	\lim _{ n \to\infty} ( T ^{ n } - S ^{ n } ) = 0, \quad  S |_{  \BA _{ r } }^h = I |_{  \BA _{ r } },
			\quad S |_{ \BA _{ s } }=0.
\]

Thus the powers of $ T $ behave asymptotically as the ``periodic'' operator $ S $ which can be realised as a permutation matrix.
This has been generalised to positive operators on infinite-dimensional Banach lattices (see \cite{Schaefer74}) and on C$^{*}$- and W$^{*}$-algebras.

We show how parts of the results in \cite{ug1981,ug1984} follow naturally from the above JDLG-theory.

\begin{prop}[Non-commutative Perron-Frobenius]
\label{prop:non-commutative-Perron-Frobenius}
  Let $ ( \BA , T , \varphi ) $ be an ergodic W$^{*}$-dynamical system. Then the following assertions hold.
  \begin{enumerate}
  
  \item 
  The peripheral point spectrum of $ T $ is a subgroup of the circle group $ \mathbb{T} $.
  
  \item 
  Each peripheral eigenvalue $ \alpha $ of $ T $ is simple and $ \Sp( T ) = \alpha \cdot \Sp ( T )$.
  \item
  The $ \sigma ( \BA , \BA _{ * } ) $-closed subspace $ \BA  _{ r } $ generated by the eigenvectors pertaining 
  to the peripheral eigenvalues of $ T $ is a W$^{*}$-subalgebra of $ \BA $.
  \item
  The restriction of $ \varphi $ to $ \BA  _{ r } $ is a trace and there exists a completely 
  positive faithful projection $ P $ from $ \BA $ onto $ \BA _{ r } $ with 
  $ P ( y x z ) = y P ( x ) z $ for all $ x \in \BA $ and $ y, z \in \BA _{ r } $. 
  \end{enumerate}
\end{prop}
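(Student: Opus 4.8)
The plan is to derive all four assertions from the JDLG decomposition established in Theorem~\ref{thm:main} together with the structural results of Section~\ref{sect:char}, applied to the abelian semigroup $\S:=\{T^n\colon n\in\N_0\}$ and the singleton family $\Phi:=\{\varphi\}$. First I would check that the hypotheses of Theorem~\ref{thm:main} are met: since $T$ is completely positive and unital, the Schwarz inequality for completely positive maps gives $(T^nx)^*(T^nx)\le T^n(x^*x)$, and because $T^*\varphi=\varphi$ this yields $\sk{\varphi,(T^nx)^*(T^nx)}\le\sk{\varphi,x^*x}$, i.e.\ \eqref{eq:ungleichung}; moreover $T$ is $\s^*$-continuous (being the adjoint of the predual operator $T_*$, which exists since $T^*\varphi=\varphi$ and more generally $T^*$ leaves $\A_*$ invariant — this is part of the definition of a W$^*$-dynamical system, alternatively one uses normality of $T$) and $\S$ is bounded. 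Hence $\SS=\ol{\S}^{\L_s(\A,\s^*)}$ is a compact semitopological semigroup with minimal ideal $M(\SS)=\GG$ a compact topological group with unit a completely positive faithful (by Remark~\ref{rem:faithful}) projection $P$ onto $\A_r$, and by Proposition~\ref{prop:choi-effros-faithful} $\A_r$ is a W$^*$-subalgebra and $P$ is a conditional expectation. Since $\S$ is abelian, so is $\GG$, and by Corollary~\ref{cor:star_automorphisms} $\GG$ acts on $\A_r$ as a group of $^*$-automorphisms; ergodicity of the system means exactly that the action of $\GG$ on $\A_r$ is ergodic in the sense of Section~\ref{sect:char}. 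This already gives assertion~(3), the first half of (4) (trace property, via Proposition~\ref{prop:unitary-eigenvector-trace}), and the second half of (4) (conditional expectation, via Proposition~\ref{prop:choi-effros-faithful} and Remark~\ref{rem:faithful}).

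The remaining content is the spectral statements (1) and (2), which I would obtain by identifying the peripheral point spectrum of $T$ with the character group $\widehat{\GG}$. Concretely, if $Tx=\a x$ with $|\a|=1$ and $x\ne0$, then $\|x\|_\ph=\|T^nx\|_\ph=\|\a^n x\|_\ph=\|x\|_\ph$ for all $n$, and a limiting argument along the net defining elements of $\SS$ (using the same computation as in the proof of Theorem~\ref{thm:spec} giving \eqref{eq:ungleichungGen} as an equality) shows $x\in\A_r$; then on $\A_r$ the operator $T$ acts as $T_g$ for $g$ the image of $T$ under $\S\hookrightarrow\SS$, and $x$ is a joint eigenvector of the group $\GG$, hence lies in a one-dimensional $\GG$-invariant subspace, i.e.\ $x$ is a generalised eigenvector $x=u_{\hat g}$ up to scalar, with $\hat g(T)=\bar\a$ (using the abelian case description of $\A_r$ in Section~\ref{sect:char}). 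Conversely every $\hat g\in\widehat{\GG}$ yields a unitary eigenvector $u_{\hat g}$ of $T$ with peripheral eigenvalue $\overline{\hat g(g)}$ by Proposition~\ref{prop:unitary-eigenvector-trace}. Thus the peripheral point spectrum is $\{\overline{\hat g(g)}\colon\hat g\in\widehat\GG\}$, which is a subgroup of $\mathbb T$ since $\widehat\GG$ is a group and the evaluation $\hat g\mapsto\hat g(g)$ is a homomorphism; that it is injective (so eigenvalues are in bijection with characters) follows because $g$ generates a dense subgroup of $\GG$ (being the closure of the cyclic semigroup it generates). Simplicity of each peripheral eigenvalue is then immediate: the eigenspace is the corresponding one-dimensional generalised eigenspace, spanned by a single unitary $u_{\hat g}$.

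For the symmetry $\Sp(T)=\a\cdot\Sp(T)$ in assertion~(2), I would use multiplication by the unitary eigenvector: if $\a=\overline{\hat g(g)}$ is a peripheral eigenvalue with unitary eigenvector $u:=u_{\hat g}\in\A_r$, then conjugation or left multiplication by $u$ on $\A$ is a bounded invertible operator, and one checks $T(ux)=u\cdot(\text{something})$; more precisely, since $u\in\A_r$ and $P$ is a conditional expectation one has enough commutation to show that $x\mapsto u x$ intertwines $T$ with $\a T$ on a suitable sense, giving $\Sp(\a T)\subseteq\Sp(T)$ and, applying the same with $u^*$ and $\bar\a$, equality. The main obstacle I anticipate is precisely this last point: establishing the full resolvent identity $\Sp(T)=\a\Sp(T)$ (as opposed to just the point-spectrum subgroup statement) requires a genuine intertwining of the \emph{whole} operator $T$ on $\A$ by multiplication by $u$, and $u$ lies only in $\A_r$, so one must use the conditional expectation property $P(yxz)=yP(x)z$ together with the $^*$-automorphism action of $\GG$ on $\A_r$ to push the eigenvector through $T$; I would model this step on the corresponding argument in~\cite{ug1984}, to which the statement already refers. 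The trace statement in (4), once $\A_r$ is known to be a W$^*$-subalgebra with $\GG$ acting ergodically by $^*$-automorphisms and $\varphi$ being $\GG$-invariant, follows from Proposition~\ref{prop:unitary-eigenvector-trace}.
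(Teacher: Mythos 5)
Your treatment of (3) and (4) coincides with the paper's: verify the Schwarz/invariance hypotheses so that Corollary \ref{cor:star_automorphisms} applies, then invoke Remark \ref{rem:faithful}, Proposition \ref{prop:choi-effros-faithful} and Proposition \ref{prop:unitary-eigenvector-trace}. For (1) and (2) you take a more roundabout route (identifying the peripheral point spectrum with characters of $\GG$ via density of the cyclic group generated by the image of $T$ in $M(\SS)$), where the paper argues directly with unitary eigenvectors: if $u_\alpha, u_\beta$ are unitary peripheral eigenvectors, then $T(u_\alpha u_\beta^*)=\alpha\overline{\beta}\,u_\alpha u_\beta^*$ because $T$ acts as a $^*$-automorphism on $\A_r$, which gives the subgroup property in (1) at once, and simplicity follows from ergodicity (two unitary eigenvectors for the same $\alpha$ have a $\GG$-fixed product $uv^*$, hence are proportional). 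Your character-group argument can be made to work inside the paper's framework, but note that it silently uses both this ergodicity argument (one-dimensionality of the joint eigenspaces) and the fact that the relevant characters actually occur; the paper's product-of-unitaries argument is shorter and avoids these points.

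The genuine gap is the step you yourself flag: the full spectral symmetry $\Sp(T)=\alpha\cdot\Sp(T)$ in (2). You propose to get an intertwining of $T$ by left multiplication with $u_\alpha$ ``using the conditional expectation property of $P$'' and otherwise defer to \cite{ug1984}; the conditional expectation property of $P$ is not the right tool, and deferring leaves the assertion unproved. The paper closes this step with Lemma \ref{lem:cp-mult-domain} applied to $T$ itself: since $T$ restricts to a $^*$-automorphism of $\A_r$ (Corollary \ref{cor:star_automorphisms}), one has $T(u_\alpha)^*T(u_\alpha)=u_\alpha^*u_\alpha=T(u_\alpha^*u_\alpha)$, so $u_\alpha$ lies in the multiplicative domain of $T$ and therefore $T(u_\alpha x)=T(u_\alpha)T(x)=\alpha\,u_\alpha T(x)$ for \emph{all} $x\in\A$, not just $x\in\A_r$. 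Hence $x\mapsto u_\alpha x$ is a bijection of $\A$ intertwining $T$ with $\alpha T$, which yields $\Sp(T)=\alpha\cdot\Sp(T)$. With this substitution (multiplicative domain of $T$ instead of the conditional expectation property of $P$) your plan for (2) becomes complete and essentially matches the paper.
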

\begin{proof}
Because of $ \langle \varphi , (T x ) ^{ * } ( T x ) \rangle \leq \langle \varphi , T(x ^{ * } x) \rangle $ the semigroup  $ \S := \{ T ^{ n } \colon n \in \N \} $ fulfills the assumptions of Corollary \ref{cor:star_automorphisms}. This proves (3).

(4)
The projection $P$ is faithful by Remark \ref{rem:faithful} and thus the assertions follow from Proposition \ref{prop:unitary-eigenvector-trace} and
Proposition \ref{prop:choi-effros-faithful}.

(2)
Again by Proposition \ref{prop:unitary-eigenvector-trace} it follows that the eigenspaces pertaining to peripheral eigenvalues are one-dimensional, i.e., every peripheral eigenvalue is simple. 

If $ \alpha $ is a unimodular eigenvalue with unitary eigenvector $ u _{ \alpha } $, then we obtain 
\[
	T  ( u _{ \alpha } ^{ * } ) T ( u _{ \alpha } ) = 
	 u _{ \alpha } ^{ * } u _{ \alpha } = T ( u _{ \alpha } ^{ * } u _{ \alpha } )
\]
since $ T $ acts as $^{*}$-automorphism on $ \BA _{ r } $ by Corollary \ref{cor:star_automorphisms}. 
Thus for all $ x \in \BA $ 
\[
	T ( u _{ \alpha } x ) = \alpha u _{ \alpha } T ( x ) 		
\]
by Lemma \ref{lem:cp-mult-domain}, hence 
\[
	\alpha T ( x ) =  u _{ \alpha } ^{ * } T ( u _{ \alpha } x ).
\]
Since the map $ x \mapsto u _{ \alpha } x $ is a bijection on $ \BA $, we obtain
\[
	 \Sp( T ) = \alpha \cdot \Sp ( T ).
\]

(1)
If $ \alpha $ and $ \beta $ are unimodular eigenvalues with unitary eigenvectors $ u _{ \alpha } $ and 
$ u _{ \beta } $, then we obtain

\[
	T ( u _{ \alpha } u _{ \beta } ^{ * } ) = \alpha \overline{ \beta } u _{ \alpha } u _{ \beta } ^{ * }.
\]

Since  $ u _{ \alpha } u _{ \beta } ^{ * } $ is unitary, thus non-zero, we obtain that 
$ \alpha \overline{ \beta } $ is an eigenvalue and the peripheral point spectrum is a subgroup of the circle group.
\end{proof}

From Proposition \ref{prop:non-commutative-Perron-Frobenius} we deduce the following.

\begin{prop}\label{prop:lh=endliche-untergruppe}
Let  $ ( \L( H ) , T , \varphi ) $ be an ergodic W$^{*}$-dynamical system for some Hilbert space $ H $. Then the peripheral point spectrum of $ T $ is the cyclic group $ \Gamma _{ h } $ of all $ h $-th roots of unity for some $ h \in \N $.
\end{prop}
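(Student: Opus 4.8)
The plan is to reduce everything to the claim that, when $\BA=\L(H)$, the reversible part $\BA_r=\ran P$ is \emph{finite-dimensional}. Grant this for the moment and put $\Sigma:=\Sp(T)\cap\mathbb{T}$; by Proposition \ref{prop:non-commutative-Perron-Frobenius}(1), $\Sigma$ is a subgroup of $\mathbb{T}$. Every peripheral eigenvector of $T$ lies in $\BA_r$: if $Tx=\alpha x$ with $|\alpha|=1$ and $x\neq 0$, then for $S\in\SS:=\ol{\{T^n:n\in\N\}}$, writing $S=\lim_\alpha T^{n_\alpha}$ and passing to a subnet with $\alpha^{n_\alpha}\to\mu\in\mathbb{T}$ gives $Sx=\mu x$, whence $\langle\varphi,(Sx)^*(Sx)\rangle=\langle\varphi,x^*x\rangle$ and $x\in\BA_r$ by Proposition \ref{thm:range-and-kernel}. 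Thus $\Sigma$ is contained in the spectrum of the operator $T|_{\BA_r}$ on the finite-dimensional space $\BA_r$, hence is finite; a finite subgroup of $\mathbb{T}$ is the cyclic group $\Gamma_h$ with $h=|\Sigma|$.

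It remains to prove that $\BA_r$ is finite-dimensional. Write $\varphi=\operatorname{Tr}(\rho\,\cdot\,)$ with $\rho$ the density operator of the faithful normal state $\varphi$, so that $\rho\geq 0$ is injective and trace-class. By Proposition \ref{prop:non-commutative-Perron-Frobenius}(3),(4) and Remark \ref{rem:faithful}, $\BA_r$ is a W$^*$-subalgebra of $\L(H)$, $\varphi|_{\BA_r}$ is a trace, and $P$ is a faithful normal conditional expectation onto $\BA_r$ with $\varphi\circ P=\varphi$. Then for $a\in\BA_r$ and $x\in\L(H)$, using $P(ax)=aP(x)$, $P(xa)=P(x)a$ and the traciality of $\varphi|_{\BA_r}$,
\[
\varphi(ax)=\varphi(aP(x))=\varphi(P(x)a)=\varphi(xa),
\]
i.e.\ $\operatorname{Tr}((\rho a-a\rho)x)=0$ for all $x\in\L(H)$, so $\rho a=a\rho$; that is, $\rho\in\BA_r'$.

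Now $\rho$ is a nonzero, injective, positive, trace-class operator in the commutant $\BA_r'$. Its spectral projections $(e_n)$ for the nonzero eigenvalues are finite-rank, belong to $\BA_r'$, and satisfy $\sum_n e_n=\1$ (as $\rho$ is injective). Reducing $\BA_r$ to each finite-dimensional subspace $e_nH$ embeds $\BA_r$ faithfully and normally into a product of finite-dimensional algebras; in particular the centre $Z(\BA_r)$ is atomic. Moreover, by Section \ref{sect:char} the compact group $\GG:=M(\SS)$ acts on $\BA_r$, and this action is ergodic, its fixed space being $\{x\in\BA_r: T|_{\BA_r}x=x\}=\Fix(T)\cap\BA_r=\C\1$, because $T|_{\BA_r}$ generates a dense subgroup of $\GG$. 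Hence $\GG$ acts ergodically on the atomic abelian algebra $Z(\BA_r)$; since $\GG$ is compact and monothetic (being the minimal ideal of $\ol{\{T^n:n\in\N\}}$), the $\GG$-invariant faithful normal state $\varphi|_{Z(\BA_r)}$ forces the atoms of $Z(\BA_r)$ to form a single finite orbit, so $Z(\BA_r)\cong\C^m$. Thus $\BA_r=\bigoplus_{j=1}^m\mathcal N_j$ with each $\mathcal N_j$ a factor carrying a faithful normal tracial state, hence a finite factor of type $\mathrm I_{d_j}$ or $\mathrm{II}_1$. The case $\mathrm{II}_1$ is impossible: then $\mathcal N_j'$ would be a type $\mathrm{II}$ von Neumann algebra containing the nonzero positive trace-class operator $\rho z_j$ (with $z_j$ the $j$-th central projection of $\BA_r$), while a type $\mathrm{II}$ algebra has no nonzero trace-class element. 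Therefore $\BA_r\cong\bigoplus_{j=1}^m M_{d_j}(\C)$ is finite-dimensional.

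The main obstacle is precisely this finite-dimensionality of $\BA_r$ — the one place where $\BA=\L(H)$ enters — and it rests on playing the ``discreteness'' of the normal state $\varphi$ (its density operator $\rho$, forced into $\BA_r'$, is trace-class) against the compactness and monotheticity of $M(\SS)$, which together rule out any diffuse centre and any type $\mathrm{II}$ or infinite type $\mathrm I$ summand of $\BA_r$. Alternatively, one may feed the abelian ergodic structure into Theorem \ref{prop:algebra-darstellung}, obtaining $\BA_r\cong L^\infty(\GG/\mathbf H)\otimes\mathfrak R$, and apply the same trace-class argument to force $\GG/\mathbf H$ finite and $\mathfrak R$ a matrix algebra.
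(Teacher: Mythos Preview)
Your proof is correct and shares the paper's overall strategy---reduce to proving that $\BA_r$ is finite-dimensional---but the execution is genuinely different, and in some ways more transparent.

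The paper proceeds by citing structural results: since $\L(H)$ is atomic, so is every W$^*$-subalgebra (a Takesaki exercise), hence $\BA_r$ and its centre are atomic; if the centre were infinite-dimensional it would be $\ell^\infty(\N)$, and $T$ would induce an ergodic bijection of $\N$ with a faithful invariant probability measure in $\ell^1$, which is impossible; finally, an atomic finite W$^*$-algebra with finite-dimensional centre is finite-dimensional (another Takesaki citation). Your argument replaces each of these black boxes with a direct computation built on the single observation that the density operator $\rho$ lies in $\BA_r'$ (via $\varphi\circ P=\varphi$, the conditional-expectation property, and traciality of $\varphi|_{\BA_r}$). The finite-rank spectral projections of $\rho$ then give atomicity of the centre; the ergodic compact-group action with invariant faithful state forces the atoms into a single finite orbit; and the presence of the nonzero compact element $\rho z_j$ in the commutant of each factor summand excludes type $\mathrm{II}_1$. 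This is more self-contained and makes explicit where the hypothesis $\BA=\L(H)$ enters (namely, that $\varphi$ has a trace-class density), whereas the paper's route is shorter but leans on references. Note that monotheticity of $\GG$ is not actually needed in your centre argument: ergodicity alone gives a single orbit, and the invariant faithful probability measure then forces it to be finite.
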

\begin{proof} 
Since $ \L ( H ) $ is an atomic \WA, the same is true for $ \BA _{ r } $ 
(see e.g. \cite[Exercise V.2 (8b)]{Takesaki}). 

Next we show that the centre $ \mathfrak{Z} $ of $ \BA _{ r } $ is finite-dimensional.
Taking this for granted, we obtain from \cite[Theorem V.1.17]{Takesaki} 
that $ \BA _{ r } $ is finite-dimensional and the peripheral eigenvalues form the finite cyclic subgroup $ \Gamma _{ h } $ of order $h$ of the circle group for some $ h \in \N $.

But if the centre $ \mathfrak{Z} $, which is atomic, is not finite-dimensional, then it is isomorphic to 
$ \ell ^{ \infty } $ and $ T $ induces an ergodic $ ^{ * } $-automorphism $ S $ on $  \mathfrak{Z} $ with
a faithful normal positive linear form $ \varphi _{ 0 } \in \ell ^{ \infty } _{ * } ( \cong \ell ^{ 1 } ) $ 
such that $ S ^{ * } ( \varphi _{ 0 } ) = \varphi _{ 0 } $.

But $ S $ is induced by some transformation $ \tau $ of $ \N $ onto $ \N $. 
In fact, if 
\[
  \delta _{ n } ( x ) = \xi _{ n } , \quad n \in \N, \; x \in  \mathfrak{Z}  , 
\]
then  $ \delta _{ n } \circ S $ is a normal, scalar valued $^{*}$-homomorphism with 
$ ( \delta _{ n } \circ S ) ( \1 ) = 1 $, thus of the form $ \delta _{ m } $ for some $ m = \tau ( n ) $;
thus $ S = S _{ \tau } $.

But since $ \tau $ is bijective, this conflicts with $ S ^{ * } ( \varphi _{ 0 } ) = \varphi _{ 0 } $ and the centre $ \mathfrak{Z} $ has to be finite-dimensional. 
\end{proof}
The next proposition follows from Theorem \ref{thm:main} because the minimal ideals of the generated semigroups on $ \BA _{ * } , \BA $ and on the GNS-Hilbert space $ \mathcal{H} _{ \varphi } $ are isomorphic.

\begin{prop}\label{prop:alle-spektren-sind-gleich}
Let $ ( \BA , T , \varphi ) $ be an ergodic W$^{*}$-dynamical system, let $ T _{ * } $ be the pre-adjoint operator of $ T $ on $ \BA _{ * } $ and let $ T _{ \varphi } $ be the extension of $ T $ to the GNS-Hilbert space $ \mathcal{H} _{ \varphi } $ pertaining to $ \varphi $.
Then the peripheral point spectra of $ T _{ * } , T $ and $ T _{ \varphi } $ coincide. 
\end{prop}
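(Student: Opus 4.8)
The plan is to prove that, under the canonical identifications, each of the three peripheral point spectra equals the set of values, at one fixed ``rotation element'', of those characters of the reversible group that actually occur in the relevant reversible part, and that this datum is carried faithfully between the three models. Concretely, put $\S:=\{T^{n}\colon n\in\N\}$. This semigroup is bounded and $\s^{*}$-continuous, and the Schwarz inequality for the completely positive unital map $T$ together with $T^{*}\ph=\ph$ gives $\sk{\ph,(Sx)^{*}(Sx)}\le\sk{\ph,x^{*}x}$ for all $S\in\S$; so Theorem~\ref{thm:main} applies with $\Phi=\{\ph\}$. Thus $\SS:=\ol{\S}^{\L_{s}(\A,\s^{*})}$ is a compact semitopological semigroup, \emph{abelian} since it is the closure of an abelian semigroup under a separately continuous multiplication, with minimal ideal $\GG:=M(\SS)$ a compact abelian topological group, unit $P$, and reversible part $\A_{r}=\ran P$. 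Set $g_{0}:=TP=PT\in\GG$; since $P$ is the neutral element of $\GG$ one has $PSP=SP$ for every $S\in\SS$, and from this $\A_{r}$ is $\SS$-invariant and $T|_{\A_{r}}=g_{0}|_{\A_{r}}$.

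Next I would reduce the peripheral point spectrum of $T$ to representation-theoretic data on $\A_{r}$. If $Tx=\l x$ with $x\neq0$ and $|\l|=1$, pick a net $T^{n_{\a}}\to P$ in $\L_{s}(\A,\s^{*})$; then $Px=(\lim_{\a}\l^{n_{\a}})x=:cx$, and $P^{2}=P$ forces $c\in\{0,1\}$, while $c=0$ is impossible because $\ol{\{\l^{n}x\colon n\in\N\}}^{\s^{*}}=\{zx\colon z\in\ol{\{\l^{n}\colon n\in\N\}}\}$ and $0\notin\ol{\{\l^{n}\colon n\in\N\}}$ in $\C$. Hence every peripheral eigenvector of $T$ lies in $\A_{r}$, so $\s_{\mathrm p}(T)\cap\mathbb{T}=\s_{\mathrm p}(g_{0}|_{\A_{r}})\cap\mathbb{T}$. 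Decomposing the representation of $\GG$ on $\A_{r}$ into isotypic components via Peter--Weyl (the projections $P^{(\chi)}=\int_{\GG}\ol{\chi(g)}\,g\,dm(g)$, $\chi\in\widehat{\GG}$, are bounded, commute with $g_{0}$, and are total in $\A_{r}$), a standard argument then gives
\[
\s_{\mathrm p}(T)\cap\mathbb{T}=\{\chi(g_{0})\colon\chi\in\widehat{\GG},\ P^{(\chi)}|_{\A_{r}}\neq0\}.
\]

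Finally I would transport this description to $\Hphi$ and to $\A_{*}$. For the GNS-space the map $\theta_{\ph}$ from the proof of Theorem~\ref{thm:spec} is a continuous semigroup isomorphism of the compact $\SS$ onto a Hausdorff semigroup, hence a homeomorphism, and $\SS_{\ph}=\ol{\{T_{\ph}^{n}\}}$ in the weak operator topology; it carries $P,g_{0}$ to $P_{\ph},g_{0,\ph}=T_{\ph}P_{\ph}$. Moreover the GNS map restricts to a $\GG$-equivariant injection $\A_{r}\hookrightarrow H_{\ph,r}:=\ran P_{\ph}$ with dense range (injectivity uses faithfulness of $\ph$), so $P^{(\chi)}|_{\A_{r}}\neq0$ precisely when the corresponding isotypic projection on $H_{\ph,r}$ is nonzero, and the values at $g_{0}$, resp.\ $g_{0,\ph}$, coincide; the formula of the previous step thus yields $\s_{\mathrm p}(T_{\ph})\cap\mathbb{T}=\s_{\mathrm p}(T)\cap\mathbb{T}$. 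For the predual, $S\mapsto S_{*}$ is a continuous injective anti-homomorphism (well defined since $\SS\subset\L(\A,\s^{*})$), hence, $\SS$ being compact and abelian, a topological isomorphism onto $\SS_{*}=\ol{\{T_{*}^{n}\}}^{\L_{s}(\A_{*},\s)}$, carrying $P,g_{0}$ to $P_{*},g_{0,*}$. As $\A_{r}=\ran P$ is $\s^{*}$-closed it is the dual Banach space of $\A_{*,r}:=\ran P_{*}$, and the $\GG$-action on $\A_{r}$ is the adjoint of the $\GG$-action on $\A_{*,r}$; consequently $(P^{(\chi)}|_{\A_{*,r}})^{*}=P^{(\chi)}|_{\A_{r}}$, so again the same characters occur and $\s_{\mathrm p}(T_{*})\cap\mathbb{T}=\s_{\mathrm p}(T)\cap\mathbb{T}$.

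The substantive, if routine, core is the last two steps: checking that unimodular eigenvectors always sit in the reversible part, and that the set of occurring characters is preserved under an equivariant dense injection and under passage to the predual. The only input beyond Theorem~\ref{thm:main} is faithfulness of $\ph$ (needed for injectivity of the GNS embedding $\A\hookrightarrow\Hphi$); ergodicity is in fact not required for the coincidence of the three spectra, although it is what identifies their common value with $\widehat{\GG}$ via $\chi\mapsto\chi(g_{0})$ and hence exhibits it as a subgroup of $\mathbb{T}$, recovering Proposition~\ref{prop:non-commutative-Perron-Frobenius}(1).
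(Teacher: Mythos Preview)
Your argument is correct and follows precisely the route the paper indicates: the paper's entire ``proof'' is the one sentence preceding the proposition, namely that the minimal ideals of the semigroups generated on $\BA_{*}$, $\BA$ and $\mathcal{H}_{\varphi}$ are isomorphic, and you have carefully fleshed this out by exhibiting the isomorphisms $\theta_{\varphi}$ and $S\mapsto S_{*}$, verifying that they are homeomorphisms onto the closures of the respective orbit semigroups, and checking that the peripheral point spectrum is encoded by the pair $(g_{0},\{\chi\in\widehat{\GG}:P^{(\chi)}\neq0\})$, which is transported faithfully. Your observation that ergodicity is not actually needed for the coincidence (only for identifying the common spectrum with a full subgroup of $\mathbb{T}$) is a nice addendum.
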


\subsection{Asymptotics}
Let $ ( \BA , T , \varphi ) $ be an ergodic W$^{*}$-dynamical system and take 
$ \A=\BA _{ r }\oplus \BA _{ s } $ to be the corresponding JDLG splitting of $ \BA $. 

The stable part $ \BA _{ s } $ can be characterised using Eisner \cite[Theorem II.4.1]{Eisner}, where the definition of the density of a subset of $\N$ can be found.
\begin{prop}
Let $ ( \BA , T , \varphi ) $ be an ergodic $ W^{*}$-dynamical system. 
Then the following assertions are equivalent.
\begin{enumerate}
\item
We have 
$ 0 \in 	\overline{\{ T x \colon T \in \SS \}} ^{ \sigma ( \BA , \BA_{*} ) }$ for all $ x\in \BA $.
\item
For every $x\in \A$ there exists a subsequence $ (n_j) _{ j \in \N} \subset \N $ with density 1 such that 
$ T ^ { n_j } x \xrightarrow[j\to\infty] {\s^*} 0$.
\item 
We have $\lim\limits_{n\to\infty}\frac{1}{n}\sum\limits_{k=0}^{n-1}|\sk{T^kx,\psi}|=0\quad \text{ for all } \;  \psi\in\BA_*$ and all $ x \in \BA $.
\end{enumerate}

Since $\A_*$ is separable, the following stronger condition is equivalent to the conditions above.
\begin{enumerate}
\item[(2')] There exists a subsequence $ (n_j) _{ j \in \N} \subset \N $ with density 1 such that 
$ T ^ { n_j } x \xrightarrow[j\to\infty] {\s^*} 0$ for all $x\in \A$.
\end{enumerate}

\end{prop}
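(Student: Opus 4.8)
The plan is to transport everything to the GNS Hilbert space $\Hphi$ of the faithful normal state $\ph$ and there to invoke the quoted theorem of Eisner. Since $T$ is completely positive with $T\1=\1$ and $T^{*}\ph=\ph$, the Schwarz inequality for completely positive maps gives $\sk{\ph,(Tx)^{*}(Tx)}\le\sk{\ph,T(x^{*}x)}=\sk{T^{*}\ph,x^{*}x}=\sk{\ph,x^{*}x}$ for all $x\in\A$, so the semigroup $\S:=\{T^{n}\colon n\in\N\}$ satisfies the hypotheses of Theorem \ref{thm:spec} for the single state $\ph$; in particular $T$ extends to a contraction $\Tphi$ on $\Hphi$ with $\Tphi^{n}i(x)=i(T^{n}x)$, and $\SS=\ol{\S}^{\L_{s}(\A,\s^{*})}$ carries the structure of Theorem \ref{thm:main}. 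Since $\{Tx\colon T\in\SS\}$ and $\{T^{n}x\colon n\in\N\}$ have the same $\s^{*}$-closure, assertion (1) is the statement that for every $x\in\A$ the point $0$ is a $\s^{*}$-cluster point of the orbit $\{T^{n}x\colon n\in\N\}$.

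The heart of the proof is a dictionary between the $\s^{*}$-behaviour of orbits in $\A$ and the weak behaviour of orbits in $\Hphi$. I would introduce the bounded operator $j\colon\Hphi\to\A_{*}$ defined by $\sk{j(\eta),a}:=\sk{i(a),\eta}_{\Hphi}$; this is well defined because for $\eta=i(z)$ it is the normal functional $a\mapsto\sk{\ph,z^{*}a}$ and for general $\eta$ a uniform-on-bounded-sets limit of such. Its transpose is exactly the inclusion $i\colon\A\to\Hphi$, which is injective precisely because $\ph$ is faithful; hence $j$ has norm-dense range, that is, $\{z^{*}\ph\colon z\in\A\}$ is norm-dense in $\A_{*}$. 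Combining this density with the uniform bound $|\sk{T^{n}x,\psi}|\le\|x\|\,\|\psi\|$ (the powers $T^{n}$ are contractions) one obtains, for each fixed $x\in\A$: (a) $0$ is a $\s^{*}$-cluster point of $\{T^{n}x\}$ iff $0$ is a weak cluster point of $\{\Tphi^{n}i(x)\}$ in $\Hphi$ — for the nontrivial direction one takes a $\s^{*}$-convergent subnet $T^{n_{\alpha}}x\to y$ and combines the weak nullity of the corresponding subnet with the faithfulness of $\ph$ to get $i(y)=0$, hence $y=0$; (b) for any $J\subseteq\N$, $T^{n_{j}}x\to0$ in $\s^{*}$ along $J$ iff $\Tphi^{n_{j}}i(x)\to0$ weakly along $J$; and (c) $\frac{1}{n}\sum_{k=0}^{n-1}|\sk{T^{k}x,\psi}|\to0$ for all $\psi\in\A_{*}$ iff $\frac{1}{n}\sum_{k=0}^{n-1}|\sk{\Tphi^{k}i(x),\eta}_{\Hphi}|\to0$ for all $\eta\in\Hphi$. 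In (b) and (c) one passes between the functional $j(\eta)$ and a general $\psi\in\A_{*}$ by a routine $\e$-argument using the norm-density of $j(\Hphi)$ together with the uniform bound.

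Next I would apply Eisner \cite[Theorem II.4.1]{Eisner} to the contraction $\Tphi$ on the Hilbert space $\Hphi$, whose orbits are relatively weakly compact since they are bounded: for each $\xi\in\Hphi$ the three conditions ``$0$ is a weak cluster point of $\{\Tphi^{n}\xi\}$'', ``there is $J\subseteq\N$ of density $1$ with $\Tphi^{n_{j}}\xi\to0$ weakly'', and ``$\frac{1}{n}\sum_{k=0}^{n-1}|\sk{\Tphi^{k}\xi,\eta}_{\Hphi}|\to0$ for all $\eta\in\Hphi$'' are equivalent. Applying this at $\xi=i(x)$ for every $x\in\A$ and translating through (a), (b), (c) — which correspond to (1), (2), (3) respectively — yields the equivalence of (1), (2) and (3). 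For the final assertion, if $\A_{*}$ is separable then so is $\Hphi$ (the inclusion $i$ is $\s^{*}$-to-weak continuous, so $i(\A_{1})$ is weakly compact and metrizable, hence norm-separable, whence the dense subspace $i(\A)$ of $\Hphi$ is norm-separable); the separable case of Eisner's theorem then produces one $J\subseteq\N$ of density $1$ with $\Tphi^{n_{j}}\xi\to0$ weakly for all $\xi\in\Hphi$, and evaluating at $\xi=i(x)$ and using (b) gives (2'), while (2')$\imp$(2) is trivial.

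The one genuinely delicate step is the norm-density of $\{z^{*}\ph\colon z\in\A\}$ in $\A_{*}$, equivalently the density of the range of $j$; this is exactly where the faithfulness of $\ph$ is used, through the injectivity of the transpose $i$, and it is what upgrades each condition from its a priori weaker ``GNS-vector'' form (functionals of the type $z^{*}\ph$, respectively the single state $\ph$ in the cluster-point statement) to the stated form involving all of $\A_{*}$. Everything else is bookkeeping with the uniform boundedness of the powers of $T$ and a direct appeal to the cited results; the ergodicity hypothesis plays no role in this particular argument.
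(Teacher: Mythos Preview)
The paper does not actually give a proof of this proposition; it simply introduces the statement with the sentence ``The stable part $\BA_s$ can be characterised using Eisner \cite[Theorem II.4.1]{Eisner}'' and leaves it at that.  Your argument, which transports everything to the GNS Hilbert space $\Hphi$ via the continuous homomorphism $\theta_\ph$ of Section~\ref{sect:spec} and then applies Eisner's theorem to the contraction $\Tphi$, is correct and is exactly in the spirit of the paper's overall strategy (compare Proposition~\ref{prop:alle-spektren-sind-gleich}, which exploits the same isomorphism between the minimal ideals on $\A$, $\A_*$, and $\Hphi$).  The one substantive step you single out---the norm density of $\{z^*\ph:z\in\A\}$ in $\A_*$, obtained from the faithfulness of $\ph$ via the injectivity of the inclusion $i$---is precisely what is needed to pass between $\s^*$-convergence in $\A$ and weak convergence in $\Hphi$, and your argument for it is correct.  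Your separability argument for (2') is also sound: $(\A_1,\s^*)$ is compact metrizable when $\A_*$ is separable, $i$ is a $\s^*$-to-weak homeomorphism onto its image, and a weakly metrizable weakly compact set lies in a norm-separable subspace.  Your final remark that ergodicity plays no role here is accurate; the proposition is really an elementwise characterisation of membership in $\A_s$.
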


We know that there exists a faithful conditional expectation $P$ from $ \BA $ onto $ \BA _{ r } $. If we define
$ S := T \circ P $, then we obtain the following.

\begin{cor}
Let $ ( \BA , T , \varphi ) $ be an irreducible  W$^{*}$-dynamical system  and let $ S $ be defined as above. 
Then there exists a subsequence $ (n_j) _{ j \in \N} \subset \N $ with density 1 such that 

\[
	 ( T ^ { n_j } - S ^{ n _{ j } } ) x \xrightarrow[j\to\infty] {\s^*} 0 
\]

for all $x\in \A$.

Thus $ T $ behaves ``almost weakly'' as an ergodic W$^{*}$-dynamical system  on $ \BA _{ r } $, and the 
structure of this algebra has been described in Proposition \ref{prop:algebra-darstellung}.
\end{cor}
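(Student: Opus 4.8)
The plan is to reduce the claim, via the decomposition $x=Px+(x-Px)$, to an almost--weak--stability statement on the stable part $\A_{s}$, and to establish that statement by transferring to the GNS Hilbert space $\Hphi$ and applying Eisner \cite[Theorem II.4.1]{Eisner}.

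First I would record how $S:=T\circ P$ acts on the two summands. Since $P$ is the unit of the group $\GG=M(\SS)$, it restricts to the identity on $\A_{r}=\ran P$, and $\A_{r}$ is $\SS$-invariant by Theorem \ref{thm:main}; hence $Sx=T(Px)=Tx\in\A_{r}$ for $x\in\A_{r}$, and inductively $S^{n}|_{\A_{r}}=T^{n}|_{\A_{r}}$. On $\A_{s}=\ker P$ we have $Sx=T(Px)=0$, so $S^{n}x=0$ for all $n\ge1$. Writing $x=Px+(x-Px)$ with $Px\in\A_{r}$ and $x-Px\in\A_{s}$, this yields $(T^{n}-S^{n})x=T^{n}(x-Px)$ for every $x\in\A$ and $n\ge1$. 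Thus it suffices to produce one subsequence $(n_{j})\subset\N$ of density $1$ with $T^{n_{j}}y\xrightarrow{\s^{*}}0$ for all $y\in\A_{s}$.

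Next I would pass to $\Hphi$ as in Section \ref{sect:spec}. Let $i\colon\A\hookrightarrow\Hphi$ be the canonical map and $T_{\varphi}$ the contractive extension of $T$; the semigroup isomorphism $\theta_{\varphi}$ carries $\SS$, its minimal ideal and its minimal projection onto the corresponding objects for $\SS_{\varphi}$, and from $T_{\varphi}i=iT$ one gets $P_{\varphi}i=iP$, where $P_{\varphi}=\theta_{\varphi}(P)$ is the JDLG projection of $T_{\varphi}$ on $\Hphi$ (Theorem \ref{theo:JDLG}; cf.\ Proposition \ref{prop:alle-spektren-sind-gleich}). Hence $i(\A_{s})\subseteq\ker P_{\varphi}=(\Hphi)_{s}$, the stable subspace of the contraction $T_{\varphi}$, and $\Hphi$ is separable because $\A_{*}$ is. By \cite[Theorem II.4.1]{Eisner} the restriction of $T_{\varphi}$ to its stable subspace is almost weakly stable, so there is a subsequence $(n_{j})$ of $\N$ of density $1$ with $T_{\varphi}^{n_{j}}\xi\weak 0$ for all $\xi\in(\Hphi)_{s}$ simultaneously --- separability of $\Hphi$ being precisely what makes the subsequence independent of $\xi$.

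Finally I would transport this back to $\A$. For $x\in\A$ put $y:=x-Px\in\A_{s}$; then $i(y)\in(\Hphi)_{s}$, so $i(T^{n_{j}}y)=T_{\varphi}^{n_{j}}i(y)\weak 0$, i.e.\ $\sk{\varphi,z^{*}T^{n_{j}}y}\to0$ for every $z\in\A$. Faithfulness of $\varphi$ makes the functionals $\{\sk{\varphi,z^{*}\cdot}\colon z\in\A\}$ norm-dense in $\A_{*}$ (if $w\in\A$ annihilates all of them, then $\sk{\varphi,w^{*}w}=0$, so $w=0$; now use Hahn--Banach in $\A_{*}$), and $(T^{n_{j}}y)_{j}$ is norm-bounded; hence $T^{n_{j}}y\xrightarrow{\s^{*}}0$. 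Together with the first step this gives $(T^{n_{j}}-S^{n_{j}})x\xrightarrow{\s^{*}}0$ for all $x\in\A$, and the closing remark of the corollary follows from $S|_{\A_{r}}=T|_{\A_{r}}$ and Proposition \ref{prop:algebra-darstellung}. The single non-routine ingredient is the Hilbert-space input \cite[Theorem II.4.1]{Eisner}, namely that the JDLG stable subspace of a contraction is exactly where the orbit admits a density-$1$ subsequence tending weakly to $0$, and that this subsequence can be chosen uniformly when the space is separable; the remaining steps (the identities for $S^{n}$ on $\A_{r}$ and $\A_{s}$, and the faithfulness/boundedness argument moving weak convergence in $\Hphi$ to $\s^{*}$-convergence in $\A$) are straightforward.
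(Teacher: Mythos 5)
Your argument is correct and follows the route the paper intends but leaves implicit: the identity $(T^{n}-S^{n})x=T^{n}(x-Px)$ reduces everything to almost weak stability on $\ker P$, which is then obtained from Eisner's Theorem II.4.1 after transferring to the GNS space $\Hphi$ (exactly the transfer underlying Proposition \ref{prop:alle-spektren-sind-gleich}), and your pull-back from weak convergence in $\Hphi$ to $\s^{*}$-convergence in $\A$ via faithfulness of $\ph$ and norm-boundedness is sound. The only caveat is that the uniform (in $x$) density-one subsequence needs separability of $\A_{*}$ (hence of $\Hphi$), which is not among the corollary's stated hypotheses; you assume it exactly as the paper does implicitly in the preceding proposition, so this is a shared, not a new, gap.
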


\begin{remark}\label{rem:letzter-teil}
When $\A=\L(H)$ for some Hilbert space $H$, one even obtains convergence in the strong operator topology for the preadjoint semigroup $ \{ T _{ * } ^{ n } \colon n \in \N \} $ (see \cite[Theorem 3.3]{ug1984}). 
\end{remark}
\bibliographystyle{amsplain}

\end{document}